\documentclass[11pt,letterpaper]{amsart}

\usepackage[utf8]{inputenc}

\usepackage{amssymb}
\usepackage{amsmath}
\usepackage{amsthm}

\usepackage{tikz}
\usepackage{tikz-cd}
\usetikzlibrary{math}

\usepackage{enumitem}

\usepackage{mathtools}

\usepackage{longtable}

\usepackage{graphicx}

\usepackage{caption}
\usepackage{subcaption}
\captionsetup[subfigure]{labelfont=rm}

\usepackage[style=alphabetic,url=false]{biblatex}
\addbibresource{references/nt.bib}

\usepackage[english,noabbrev,sort]{cleveref}

\parindent=0pt 
\parskip=1ex 

\usepackage{my-math-definitions} 

\newtheorem{theorem}{Theorem}

\newtheorem{lemma}[theorem]{Lemma}
\newtheorem{corollary}[theorem]{Corollary}
\newtheorem{remark}[theorem]{Remark}
\newtheorem{question}[theorem]{Question}

\newtheorem{example}[theorem]{Example}

\DeclareMathOperator{\mult}{mult}

\DeclareMathOperator{\supp}{supp}
\DeclareMathOperator{\lm}{lm}
\DeclareMathOperator{\hull}{hull}
\DeclareMathOperator{\NP}{NP}
\DeclareMathOperator{\sm}{sm}

\newcommand{\I}{\mathcal I}

\author{Fabian Gundlach}
\address{
Harvard University,
Department of Mathematics,
1 Oxford Street,
Cambridge, MA 02138,
USA
}
\email{gundlach@math.harvard.edu}

\title{Polynomials vanishing at lattice points in a convex set}

\subjclass[2020]{14A25,14M25,14C20,13P10}

\begin{document}

\begin{abstract}
Let $P$ be a bounded convex subset of $\mathbb R^n$ of positive volume. Denote the smallest degree of a polynomial $p(X_1,\dots,X_n)$ vanishing on $P\cap\mathbb Z^n$ by $r_P$ and denote the smallest number $u\geq0$ such that every function on $P\cap\mathbb Z^n$ can be interpolated by a polynomial of degree at most $u$ by $s_P$. We show that the values $(r_{d\cdot P}-1)/d$ and $s_{d\cdot P}/d$ for dilates $d\cdot P$ converge from below to some numbers $v_P,w_P>0$ as $d$ goes to infinity. The limits satisfy $v_P^{n-1}w_P \leq n!\cdot\operatorname{vol}(P)$. When $P$ is a triangle in the plane, we show equality: $v_Pw_P = 2\operatorname{vol}(P)$. These results are obtained by looking at the set of standard monomials of the vanishing ideal of $d\cdot P\cap\mathbb Z^n$ and by applying the Bernstein--Kushnirenko theorem. Finally, we study irreducible Laurent polynomials that vanish with large multiplicity at a point. This work is inspired by questions about Seshadri constants.
\end{abstract}

\maketitle

\tableofcontents

\section{Introduction}

We write $\N$ for the set of nonnegative integers. Let $n\geq1$.

Denote the Minkowski sum of sets $A,B\subseteq \R^n$ by
\[
A+B = \{a+b\mid a\in A,\ b\in B\}.
\]
For a subset $A\subseteq \R^n$, we let
\[
kA = A+\cdots+A = \{a_1+\cdots+a_k\mid a_1,\dots,a_k\in A\}
\]
for any integer $k\geq1$ and we let
\[
d\cdot A = \{d\cdot a\mid a\in A\}
\]
for any real number $d$.

We denote the set of functions $f:\R^n\ra \R$ with finite support $\supp(f)$ contained in $A\subseteq \R^n$ by $F_A$. For any subgroup $Z$ of $\R^n$, we make $F_Z$ a ring in which multiplication is the convolution operation
\[
(f\ast g)(c) = \sum_{\substack{a,b\in Z:\\ a+b=c}} f(a)g(b).
\]
Note that
\begin{equation}\label{suppast}
\supp(f\ast g)\subseteq\supp(f)+\supp(g).
\end{equation}
Denote the convex hull of a set $A\subseteq\R^n$ by $\hull(A)$. The \emph{Newton polytope} of a function $f\in F_{\R^n}$ is $\NP(f) = \hull(\supp(f))$. The Newton polytope of a convolution is
\begin{equation}\label{NPast}
\NP(f\ast g) = \NP(f) + \NP(g).
\end{equation}
A function $f$ is a unit in $F_Z$ if and only if the support of $f$ consists of exactly one point.

We denote the vanishing ideal of a subset $A$ of $\R^n$ by
\[
\I(A) \subseteq \R[X_1,\dots,X_n].
\]

For any $e=(e_1,\dots,e_n)\in\Z^n$ and any $a=(a_1,\dots,a_n)$, we will write
\[
a^e = a_1^{e_1}\cdots a_n^{e_n}.
\]
Furthermore, for any $e=(e_1,\dots,e_n)\in\N^n$, we let
\[
|e| = e_1 + \cdots + e_n
\qquad\textnormal{ and }\qquad
e! = e_1!\cdots e_n!
\]
so that
\[
(a+b)^e = \sum_{\substack{p,q\in\N^n:\\ p+q=e}} \frac{e!}{p!q!}\cdot a^p b^q.
\]
For $p,q\in\R_{\geq0}^n$, we write $p\preceq q$ if $p_i\leq q_i$ for all $i=1,\dots,n$. We call $A$ a \emph{lower subset} of $\R_{\geq0}^n$ or $\N^n$ if for all $q\in A$ and for each $p\preceq q$ in $\R^n_{\geq0}$ or $\N^n$, respectively, we have $p\in A$.

Fix a total monomial well-order $\leq$ on the set of monomials in $X_1,\dots,X_n$. We denote the leading monomial of a nonzero polynomial $p$ by $\lm(p)$. For any polynomial ideal $I$, we write
\[
\lm(I) = \{\lm(p) \mid 0\neq p\in I\}.
\]
The monomials $X^e\notin\lm(I)$ are also called the \emph{standard monomials} of $I$. They form a basis of the vector space $\R[X_1,\dots,X_n]/I$. To any finite set $A\subseteq \R^n$, we associate the set
\[
E_A = \{e\in\N^n \mid X^e \notin\lm(\I(A))\}.
\]
The sets $E_A$ satisfy the following properties:
\begin{enumerate}[label=\alph*)]
\item $E_A$ is a lower subset of $\N^n$. 
\item If $A\subseteq B$, then $E_A\subseteq E_B$.
\item The monomials $X^e$ with $e\in E_A$ form a basis for the set of functions $A\ra \R$. In particular,
$|E_A| = |A|$.
\item $E_{A+v}=E_A$ for any $v\in \R^n$ because $p(X)$ and $p(X+v)$ have the same leading monomial.
\end{enumerate}

The dimension of the vector space of polynomials $p\in\I(A)$ with $p=0$ or $\lm(p)<X^e$ is the number of $e'\in\N^n\setminus E_A$ with $X^{e'}<X^e$. In particular:

\begin{remark}\label{deg_E}
If $\leq$ is a monomial order such that $X^e<X^{e'}$ whenever $|e|<|e'|$, the dimension of the vector space of polynomials $f\in\I(A)$ of degree at most $d$ is the number of $e\in\N^n\setminus E_A$ such that $|e|\leq d$.
\end{remark}

The Combinatorial Nullstellensatz (see \cite{alon-combinatorial-nullstellensatz}) computes $E_A$ when $A$ is a Cartesian product of finite sets:

\begin{lemma}\label{combinatorial_nullstellensatz}
Let $A_1,\dots,A_n$ be finite subsets of $\R$. Then,
\[
E_{A_1\times\cdots\times A_n} = \{0,\dots,|A_1|-1\}\times\cdots\times\{0,\dots,|A_n|-1\}.
\]
\end{lemma}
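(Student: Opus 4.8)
The plan is to combine an explicit upper bound on $E_{A_1\times\cdots\times A_n}$ with the cardinality identity $|E_A|=|A|$ from property~(c). Write $m_i = |A_i|$ and $A = A_1\times\cdots\times A_n$, so that $|A| = m_1\cdots m_n$, which is precisely the cardinality of the box $\{0,\dots,m_1-1\}\times\cdots\times\{0,\dots,m_n-1\}$ on the right-hand side. Thus it suffices to prove one inclusion; equality of cardinalities will force the other.

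First I would exhibit explicit elements of the vanishing ideal. For each $i$, set
\[
f_i(X_i) = \prod_{a\in A_i}(X_i-a).
\]
Since every point of $A$ has its $i$-th coordinate in $A_i$, the polynomial $f_i$ vanishes on all of $A$, so $f_i\in\I(A)$. As $f_i$ is a univariate polynomial in $X_i$ of degree $m_i$, its monomials are $1,X_i,\dots,X_i^{m_i}$, and because any monomial order makes a monomial strictly larger than each of its proper divisors (from the axiom $1\leq X^e$ together with multiplicativity), the leading monomial is $\lm(f_i)=X_i^{m_i}$. Hence $X_i^{m_i}\in\lm(\I(A))$ for every $i$.

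Next I would use that $\lm(\I(A))$ is closed under multiplication by monomials: if $X^e=\lm(p)$ for some $p\in\I(A)$, then $\lm(X^f p) = X^{e+f}$, so $X^{e+f}\in\lm(\I(A))$ (equivalently, $E_A$ is a lower set, as recorded in property~(a)). Therefore every monomial divisible by some $X_i^{m_i}$ belongs to $\lm(\I(A))$, and so any exponent vector $e\in E_A$ must satisfy $e_i < m_i = |A_i|$ for all $i$. This yields the inclusion
\[
E_A \subseteq \{0,\dots,|A_1|-1\}\times\cdots\times\{0,\dots,|A_n|-1\}.
\]

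Finally I would invoke property~(c), which gives $|E_A| = |A| = |A_1|\cdots|A_n|$, exactly the size of the box. A subset of a finite set having the same cardinality as that set must equal it, so the inclusion above is an equality, completing the proof. I do not anticipate a genuine obstacle here; the only step requiring a little care is confirming that $\lm(f_i)=X_i^{m_i}$ holds for an \emph{arbitrary} monomial order rather than just a degree order, and the cardinality bookkeeping then closes the argument with no further computation.
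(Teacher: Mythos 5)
Your proof is correct and follows essentially the same route as the paper's: exhibit the univariate polynomials $\prod_{t\in A_i}(X_i-t)$ with leading monomials $X_i^{|A_i|}$, use the lower-set property of $E_A$ to obtain the inclusion into the box, and conclude equality by comparing cardinalities via $|E_A|=|A|$. Your extra care in justifying $\lm(f_i)=X_i^{|A_i|}$ for an arbitrary monomial order is a detail the paper leaves implicit, but the argument is the same.
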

\begin{proof}
For any index $i$, the polynomial $\prod_{t\in A_i}(X_i-t)$ with leading monomial $X_i^{|A_i|}$ vanishes on $A_1\times\cdots\times A_n$. Therefore, $(0,\dots,0,|A_i|,0,\dots,0)\notin E_{A_1\times\dots\times A_n}$. Because $E_{A_1\times\cdots\times A_n}$ is a lower subset of $\N^n$, this implies that
\[
E_{A_1\times\cdots\times A_n} \subseteq \{0,\dots,|A_1|-1\}\times\cdots\times\{0,\dots,|A_n|-1\}.
\]
Equality follows because both sides have size $|A_1|\cdots|A_n|$.
\end{proof}

There are other cases in which $E_A$ can easily be computed. For example:

\begin{corollary}\label{lower}
Any finite lower subset $A$ of $\N^n\subset\R^n$ satisfies $E_A = A$.
\end{corollary}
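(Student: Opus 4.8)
The plan is to prove the single inclusion $E_A\subseteq A$ and then upgrade it to equality using the cardinality identity $|E_A|=|A|$ from property~c). Since $E_A=\{e\in\N^n\mid X^e\notin\lm(\I(A))\}$, the inclusion $E_A\subseteq A$ is equivalent to showing that every $f\in\N^n\setminus A$ satisfies $X^f\in\lm(\I(A))$. So the real task is: for each such $f$, exhibit a polynomial in $\I(A)$ whose leading monomial is exactly $X^f$.

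The key step is to write this polynomial down explicitly. For $f=(f_1,\dots,f_n)\in\N^n\setminus A$ I would take
\[
p_f(X) \;=\; \prod_{i=1}^{n}\ \prod_{t=0}^{f_i-1}(X_i-t),
\]
a product of one falling-factorial polynomial per variable (the inner product being $1$ when $f_i=0$). Two things then need checking. First, $\lm(p_f)=X^f$: each factor $\prod_{t=0}^{f_i-1}(X_i-t)$ is monic of degree $f_i$ in $X_i$, so every monomial occurring in the expansion of $p_f$ is some $X^j$ with $j\preceq f$, and such monomials satisfy $X^j\leq X^f$ because a monomial well-order refines the divisibility order $\preceq$ (as $1$ is the least monomial, multiplying $1\leq X^{f-j}$ by $X^j$ gives $X^j\leq X^f$); moreover $X^f$ itself occurs with coefficient $1$. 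Second, and this is exactly where the lower-set hypothesis is used, $p_f$ vanishes on $A$: for $a\in A$ one has $p_f(a)\neq0$ only if $a_i\geq f_i$ for every $i$, i.e.\ $f\preceq a$, and then $f\in A$ since $A$ is a lower set, contradicting $f\notin A$. Hence $p_f(a)=0$ for all $a\in A$, so $p_f\in\I(A)$ and $X^f=\lm(p_f)\in\lm(\I(A))$.

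This yields $E_A\subseteq A$, and combining with $|E_A|=|A|$ forces $E_A=A$. The only genuine obstacle is guessing the polynomial $p_f$; once it is in hand, both verifications (correct leading monomial, vanishing on $A$) are immediate, the second being precisely the content of the lower-set condition. I would also remark that there is no need to restrict attention to the $\preceq$-minimal elements of $\N^n\setminus A$: the construction applies verbatim to every $f$ outside $A$, which is what makes the cardinality shortcut available without any separate argument about generators of the complementary monomial ideal.
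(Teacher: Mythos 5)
Your proof is correct, but it runs in the opposite direction from the paper's. The paper proves the inclusion $A\subseteq E_A$: for each $e\in A$, the box $B_e=\{0,\dots,e_1\}\times\cdots\times\{0,\dots,e_n\}$ lies in $A$ by the lower-set hypothesis, so \Cref{combinatorial_nullstellensatz} together with the monotonicity property b) gives $B_e\subseteq E_A$, in particular $e\in E_A$; equality then follows from $|E_A|=|A|$. You instead prove $E_A\subseteq A$ by exhibiting, for each $f\notin A$, the explicit polynomial $p_f=\prod_{i=1}^{n}\prod_{t=0}^{f_i-1}(X_i-t)\in\I(A)$ with $\lm(p_f)=X^f$, and finish with the same cardinality step. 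Your two verifications are sound: every monomial occurring in $p_f$ is some $X^j$ with $j\preceq f$, hence $\leq X^f$ (a monomial well-order refines $\preceq$ because $1$ is the least monomial), and $X^f$ itself survives with coefficient $1$; and the vanishing of $p_f$ on $A$ is exactly the lower-set condition, using $A\subseteq\N^n$ so that $p_f(a)\neq0$ forces $f\preceq a$. What each route buys: the paper's argument is shorter because it recycles \Cref{combinatorial_nullstellensatz} (whose proof contains the same falling-factorial construction in univariate form) along with properties a)--c) of the sets $E_A$; yours is more self-contained, needing neither that lemma nor properties a) and b) --- only the cardinality property c) --- at the cost of redoing the product-polynomial construction in multivariate form.
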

\begin{proof}
For any $e=(e_1,\dots,e_n)\in A$, the set $A$ contains
\[B_e=\{0,\dots,e_1\}\times\cdots\times\{0,\dots,e_n\}.
\]
By the previous lemma, this implies that $E_A$ also contains $B_e$, so in particular $e\in E_A$. Hence, $A\subseteq E_A$. Equality follows because both sides have the same size.
\end{proof}

For a lexicographic monomial order, the set $E_A$ can be determined inductively, as described in \cite{lex-game}. We will come back to this in \cref{lex_section}.

Understanding the sets $E_A$ for other monomial orders can be more difficult, although $E_A$ can of course be computed for any particular set $A$ and monomial order $\leq$. (See for example \cite{computing-groebner-bases-of-finite-sets} for an algorithm for computing~$E_A$.)

For any $f\in F_{\R^n}$ and any polynomial $p(X_1,\dots,X_n)$, we write
\[
\langle f,p\rangle
= \sum_{a\in \R^n} f(a) p(a).
\]
If $f\neq0$, let $X^{\sm(f)}$ be the smallest monomial such that $\langle f,X^{\sm(f)}\rangle\neq0$. (Such a monomial exists because any finitely supported function can be interpolated by a polynomial. There is a smallest such monomial because any monomial order is a well-order.)

We will often work with the following description of $E_A$:

\begin{lemma}
For any finite set $A\subseteq \R^n$, we have
\[
E_A = \{\sm(f) \mid 0\neq f\in F_A\}.
\]
\end{lemma}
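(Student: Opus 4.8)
The plan is to prove the two inclusions separately, exploiting the fact that the bilinear pairing $\langle f,p\rangle=\sum_{a}f(a)p(a)$ identifies $F_A$ with the dual of the space of polynomial functions on $A$. For each $e\in\N^n$ write $\ell_e\colon F_A\ra\R$ for the linear functional $\ell_e(f)=\langle f,X^e\rangle$, so that $\sm(f)$ is by definition the $\leq$-smallest index $e$ with $\ell_e(f)\neq0$. The whole argument rests on a dictionary between membership in $\lm(\I(A))$ and linear dependence among the $\ell_e$: a monomial $X^e$ lies in $\lm(\I(A))$ precisely when $\ell_e$ is a linear combination of the functionals $\ell_{e'}$ with $X^{e'}<X^e$. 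Each inclusion amounts to one direction of this equivalence.

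For the inclusion $\{\sm(f)\mid 0\neq f\in F_A\}\subseteq E_A$ I would argue by contradiction. Given $0\neq f$ with $e=\sm(f)$, suppose $X^e\in\lm(\I(A))$, so there is $p\in\I(A)$ with leading monomial $X^e$ and leading coefficient $1$. Since $p$ vanishes on $\supp(f)\subseteq A$, we get $\langle f,p\rangle=0$; but every monomial of $p$ other than $X^e$ is strictly smaller than $X^e$ and hence pairs to $0$ against $f$ by minimality of $\sm(f)$, leaving $0=\langle f,p\rangle=\langle f,X^e\rangle\neq0$, a contradiction. This direction uses only the trivial half of the pairing, namely that $p\in\I(A)$ forces $\langle f,p\rangle=0$.

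For the reverse inclusion $E_A\subseteq\{\sm(f)\mid 0\neq f\in F_A\}$, fix $e\in E_A$ and let $U$ be the span of $\{\ell_{e'}\mid X^{e'}<X^e\}$ inside $F_A^\ast$. I claim $\ell_e\notin U$: otherwise $\ell_e=\sum_{e'}c_{e'}\ell_{e'}$, so $\langle f,\,X^e-\sum_{e'}c_{e'}X^{e'}\rangle=0$ for every $f\in F_A$; taking $f$ to be the point mass $\delta_a$ at each $a\in A$ shows that $X^e-\sum_{e'}c_{e'}X^{e'}$ vanishes on $A$, so it lies in $\I(A)$ with leading monomial $X^e$, contradicting $e\in E_A$. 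Choosing a basis $\ell_{e'_1},\dots,\ell_{e'_k}$ of $U$, the functionals $\ell_{e'_1},\dots,\ell_{e'_k},\ell_e$ are linearly independent, so there is $f\in F_A$ with $\ell_{e'_j}(f)=0$ for all $j$ and $\ell_e(f)=1$. Since every $\ell_{e'}$ with $X^{e'}<X^e$ lies in $U$, it follows that $\ell_{e'}(f)=0$ for all such $e'$ while $\ell_e(f)\neq0$, whence $\sm(f)=e$.

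The step requiring genuine care is not the non-degeneracy of the pairing—pairing $p$ against the point masses $\delta_a$ immediately recovers the values $p(a)$, so $\langle f,p\rangle=0$ for all $f$ forces $p\in\I(A)$—but rather the bookkeeping that, although infinitely many monomials sit below $X^e$, the dual space $F_A^\ast$ is only $|A|$-dimensional. One must therefore pass to a finite basis of $U$ before invoking the elementary fact that linearly independent functionals attain prescribed values. Once this finiteness is handled, the entire proof reduces to the dictionary of the first paragraph together with routine linear algebra.
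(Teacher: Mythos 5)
Your proof is correct and follows essentially the same route as the paper: the paper's argument is exactly your ``dictionary'' ($X^e\in\lm(\I(A))$ if and only if $X^e$, viewed as a function on $A$, is a linear combination of smaller monomials, which by duality is equivalent to the nonexistence of $f\neq0$ orthogonal to all smaller monomials but not to $X^e$), stated as a chain of equivalences. You have merely unpacked the linear algebra (finite basis of $U$, point masses $\delta_a$ for nondegeneracy) that the paper leaves implicit.
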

\begin{proof}
We have $e\in E_A$ if and only if there is no polynomial $p\in\mathcal I(A)$ with $\lm(p)=X^e$. Equivalently, the monomial $X^e$ is not a linear combination of smaller monomials when considered a function $A\ra \R$. This is equivalent to the existence of a function $f:A\ra \R$ orthogonal to all smaller monomials, but not to $X^e$, or in other words a function $f\neq 0$ with $\sm(f)=X^e$.
\end{proof}

For any $f,g\in F_{\R^n}$ and any $e\in\N^n$, we have
\begin{align}\label{conv_prod}
\langle f\ast g, X^e\rangle
&= \sum_{\substack{p,q\in\N^n:\\ p+q=e}} \frac{e!}{p!q!} \cdot \langle f, X^p\rangle \cdot \langle g,X^q\rangle. 
\end{align}
Hence,
\[
\langle f\ast g,X^e\rangle = 0
\]
for all monomials $X^e<X^{\sm(f)+\sm(g)}$ and
\[
\langle f\ast g,X^{\sm(f)+\sm(g)}\rangle = \frac{(\sm(f)+\sm(g))!}{\sm(f)!\sm(g)!} \cdot \langle f,X^{\sm(f)}\rangle \cdot \langle g,X^{\sm(g)}\rangle.
\]
It follows that
\begin{equation}\label{smast}
\sm(f\ast g) = \sm(f) + \sm(g).
\end{equation}
Together with (\ref{suppast}), we arrive at the crucial observation that
\begin{equation}\label{East}
E_A+E_B \subseteq E_{A+B}
\end{equation}
for any two finite sets $A,B\subseteq \R^n$.

We now give an overview of the remainder of this note. For simplicity, we assume that $\leq$ is a degree-lexicographic monomial order in this overview (and most of the paper), although most results can easily be generalized to arbitrary monomial orders.

In \cref{convex_section}, we consider dilates $d\cdot P$ of a bounded convex subset $P$ of $\R^n$ of positive volume and study the asymptotic behavior of the set $E_{d\cdot P\cap\Z^n}$ as $d$ goes to infinity. Using (\ref{East}) together with the fact $|E_A|=|A|$, we show that there is a convex set $S_P\subseteq\R_{\geq0}^n$ of the same volume as $P$ such that (in some precise sense)
\begin{equation}\label{approximation}
E_{d\cdot P\cap\Z^n} \approx d\cdot S_P\cap\N^n.
\end{equation}
In \Cref{vw_converge}, we prove the results on general bounded convex sets $P$ mentioned in the abstract.

The ring $F_{\Z^n}$ is naturally isomorphic to the ring of Laurent polynomials in $n$ variables. In \cref{laurent_section}, we interpret $|\sm(f)|$ as the order of vanishing of the Laurent polynomial corresponding to $f$ at the point $(1,\dots,1)$. The Seshadri constant of a weighted projective plane turns out to be related to the set $S_P$ for a triangle~$P$. (This connection with Seshadri constants originally motivated our work.)

In \cref{relatively_prime_section}, we prove a lower bound for the mixed volume of Newton polytopes of relatively prime elements $f_1$ and $f_2$ of $F_{\Z^2}$:
\[
|\sm(f_1)||\sm(f_2)| \leq 2\vol(\NP(f_1), \NP(f_2)).
\]

This inequality is used in \cref{triangle_section} to improve the results from \cref{convex_section} when $P$ is a triangle.

Given the above inequality, it is natural to study the set of irreducible functions $f\in F_{\Z^2}$ with $|\sm(f)|>\sqrt{2\vol(\NP(f))}$. In \cref{irreducible_section}, we give an infinite family of such functions, as well as a list of such functions with small Newton polytope.

\textbf{Acknowledgments.} The author is very grateful to Ziquan Zhuang for introducing him to Seshadri constants and for helpful discussions.

\section{Lattice points in convex sets}\label{convex_section}

For any bounded set $P\subseteq\R^n$, we will consider the finite set $P\cap\Z^n\subseteq\R^n$.

If $P,Q\subseteq\R^n$ are bounded sets, then (\ref{East}) implies that
\[
E_{P\cap\Z^n} + E_{Q\cap\Z^n} \subseteq E_{P\cap\Z^n + Q\cap\Z^n} \subseteq E_{(P+Q)\cap\Z^n}.
\]

In particular, if $P$ is convex, then by induction
\[
d\cdot E_{P\cap\Z^n} \subseteq d E_{P\cap\Z^n} \subseteq E_{dP\cap\Z^n} = E_{d\cdot P\cap\Z^n}
\]
for any integer $d\geq1$. We obtain the following ascending chain of sets:
\[
E_{P\cap\Z^n} \subseteq 2^{-1}\cdot E_{2\cdot P\cap\Z^n} \subseteq 2^{-2}\cdot E_{2^2\cdot P\cap\Z^n} \subseteq \cdots
\]
Let
\[
S_P = \overline{\bigcup_{k\geq0} 2^{-k}\cdot E_{2^k\cdot P\cap\Z^n}} \subseteq \R_{\geq0}^n,
\]
be the closure of their union.

\begin{example}
For the triangle $P=\hull(\{(0,0),(4,2),(2,3)\})$, \Cref{triangle_approx} shows the first four sets $2^{-k}\cdot E_{2^k\cdot P\cap\Z^n}$ along with the set $S_P$ (which we will compute in \Cref{example_quad}).
\end{example}

\begin{figure}[ht]
\newcommand{
\tikzpicture[radius=1.2pt,scale=1.2]
\fill[black!10] (0,0) -- (8/3,0) -- (2,1) -- (0,8/3) -- cycle;
\draw[->] (0,0) -- (3,0);
\draw[->] (0,0) -- (0,3);
\input{E_.tex}
\draw (8/3,0) -- (2,1) -- (0,8/3);
\draw (1.5,-0.5) node {$k=$};
\endtikzpicture
}[2]{
\tikzpicture[radius=1.2pt,scale=1.2]
\fill[black!10] (0,0) -- (8/3,0) -- (2,1) -- (0,8/3) -- cycle;
\draw[->] (0,0) -- (3,0);
\draw[->] (0,0) -- (0,3);
\input{E_#1.tex}
\draw (8/3,0) -- (2,1) -- (0,8/3);
\draw (1.5,-0.5) node {$k=#2$};
\endtikzpicture
}

\tikzpicture[radius=1.2pt,scale=1.2]
\fill[black!10] (0,0) -- (8/3,0) -- (2,1) -- (0,8/3) -- cycle;
\draw[->] (0,0) -- (3,0);
\draw[->] (0,0) -- (0,3);
\input{E_1.tex}
\draw (8/3,0) -- (2,1) -- (0,8/3);
\draw (1.5,-0.5) node {$k=0$};
\endtikzpicture
\hspace{10pt}

\tikzpicture[radius=1.2pt,scale=1.2]
\fill[black!10] (0,0) -- (8/3,0) -- (2,1) -- (0,8/3) -- cycle;
\draw[->] (0,0) -- (3,0);
\draw[->] (0,0) -- (0,3);
\input{E_2.tex}
\draw (8/3,0) -- (2,1) -- (0,8/3);
\draw (1.5,-0.5) node {$k=1$};
\endtikzpicture
\\[15pt]

\tikzpicture[radius=1.2pt,scale=1.2]
\fill[black!10] (0,0) -- (8/3,0) -- (2,1) -- (0,8/3) -- cycle;
\draw[->] (0,0) -- (3,0);
\draw[->] (0,0) -- (0,3);
\input{E_4.tex}
\draw (8/3,0) -- (2,1) -- (0,8/3);
\draw (1.5,-0.5) node {$k=2$};
\endtikzpicture
\hspace{10pt}

\tikzpicture[radius=1.2pt,scale=1.2]
\fill[black!10] (0,0) -- (8/3,0) -- (2,1) -- (0,8/3) -- cycle;
\draw[->] (0,0) -- (3,0);
\draw[->] (0,0) -- (0,3);
\input{E_8.tex}
\draw (8/3,0) -- (2,1) -- (0,8/3);
\draw (1.5,-0.5) node {$k=3$};
\endtikzpicture

\caption{The first four sets $2^{-k}\cdot E_{2^k\cdot P\cap\Z^n}$ (black dots) and the set $S_P$ (gray)}\label{triangle_approx}
\end{figure}

The sets $S_P$ inherit many properties from $E_A$. For example:
\begin{enumerate}[label=\alph*)]
\item $S_P$ is a lower subset of $\R_{\geq0}^n$.
\item If $P\subseteq Q$, then $S_P\subseteq S_Q$.
\item $S_P + S_Q \subseteq S_{P+Q}$.
\end{enumerate}

\begin{example}\label{cont_comb_nsts}
Let $P_1,\dots,P_n\subseteq\R$ be intervals of lengths $l_1,\dots,l_n>0$. Then, the Combinatorial Nullstellensatz shows that
\[
S_{P_1\times\cdots\times P_n} = [0,l_1]\times\cdots\times[0,l_n].
\]
\end{example}
\begin{example}\label{cont_lower}
More generally, any compact convex lower subset $P$ of $\R_{\geq0}^n$ satisfies $S_P=P$ by \Cref{lower}.
\end{example}

\begin{theorem}\label{cont_basic}
For any bounded convex set $P$ of positive volume:
\begin{enumerate}[label=\alph*)]
\item $S_P$ is convex.
\item $S_P$ is compact.
\item $\vol(S_P) = \vol(P)$.
\item $S_{d\cdot P} = d\cdot S_P$ for all real numbers $d>0$.\label{Sscaling}
\item $S_{P+v}=S_P$ for all $v\in\R^n$.
\end{enumerate}
\end{theorem}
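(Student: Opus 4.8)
The plan is to treat the five parts roughly in the order (a), (b), (e), (d), (c), since the volume statement (c) is the deepest and is cleanest once $S_P$ is already known to be a compact convex body. Throughout write $A_k = 2^{-k}\cdot E_{2^k\cdot P\cap\Z^n}$, so that $S_P = \overline{\bigcup_k A_k}$ and the $A_k$ form the ascending chain from the discussion preceding the theorem.

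For (a), I would show that $U=\bigcup_k A_k$ is \emph{midpoint-convex} and deduce convexity of its closure. Given $a',b'\in U$, both lie in a common $A_k$, say $a'=2^{-k}a$ and $b'=2^{-k}b$ with $a,b\in E_{2^k\cdot P\cap\Z^n}$. Since $P$ is convex, $(2^k\cdot P\cap\Z^n)+(2^k\cdot P\cap\Z^n)\subseteq 2^{k+1}\cdot P\cap\Z^n$, so \eqref{East} gives $a+b\in E_{2^{k+1}\cdot P\cap\Z^n}$ and hence $\tfrac12(a'+b')=2^{-(k+1)}(a+b)\in A_{k+1}\subseteq U$. A midpoint-convex set contains every dyadic convex combination of its points, so its closure $S_P$ contains the segment between any two of its points and is therefore convex. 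For (b), $S_P$ is closed by construction, so only boundedness is needed: if $L_i$ is the length of the projection of $P$ to the $i$-th axis, then the $i$-th projection of $2^k\cdot P\cap\Z^n$ has at most $2^kL_i+1$ elements, so \cref{combinatorial_nullstellensatz} together with monotonicity of $E$ bounds each coordinate of $E_{2^k\cdot P\cap\Z^n}$ by $2^kL_i$; dividing by $2^k$ gives $A_k\subseteq\prod_i[0,L_i]$ uniformly in $k$, whence $S_P\subseteq\prod_i[0,L_i]$ is compact.

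For the scaling and translation laws I would first record the easy special cases and then squeeze. Reindexing the defining union gives $S_{2^m\cdot P}=2^m\cdot S_P$ for every integer $m$, and for a dyadic vector $v$ the tails of the unions for $P$ and $P+v$ literally coincide (once $2^kv\in\Z^n$, property (d) of $E$ from the introduction gives $E_{2^k\cdot(P+v)\cap\Z^n}=E_{2^k\cdot P\cap\Z^n}$), so $S_{P+v}=S_P$. Using a dyadic translation (by a dyadic point of $\operatorname{int}(P)$) I may then assume $0\in\operatorname{int}(P)$, so that $d\mapsto d\cdot P$ and hence $d\mapsto S_{d\cdot P}$ are monotone. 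Since $0\in S_P$ (a nonempty lower set contains the origin) and $S_P$ is compact convex, for dyadic $q_1<d<q_2$ monotonicity and the dyadic scaling law give $q_1\cdot S_P\subseteq S_{d\cdot P}\subseteq q_2\cdot S_P$; letting $q_1\uparrow d$ and $q_2\downarrow d$ squeezes this to (d). For a general translation, rescaling by $2^k$ turns $P+v$ into an integer translate of $2^k\cdot P$ followed by a bounded shift in $[0,1)^n$; since a bounded shift is absorbed by an arbitrarily small dilation once $k$ is large, the same squeeze (now using the real scaling law just proved) yields $(1-\varepsilon)\cdot S_P\subseteq S_{P+v}\subseteq(1+\varepsilon)\cdot S_P$ for every $\varepsilon>0$, giving (e).

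Finally, for (c) the lower bound $\vol(S_P)\ge\vol(P)$ is immediate: $E_{2^k\cdot P\cap\Z^n}=2^k\cdot A_k\subseteq 2^k\cdot S_P\cap\Z^n$ gives $|2^k\cdot P\cap\Z^n|\le|2^k\cdot S_P\cap\Z^n|$, and dividing by $2^{kn}$ and letting $k\to\infty$ compares $\vol(P)$ and $\vol(S_P)$ through the lattice-point asymptotics of the convex bodies $P$ and $S_P$. The matching upper bound is the heart of the theorem, and the step I expect to be the main obstacle. I would aim for the reverse inclusion on interior points, $\operatorname{int}(S_P)\cap 2^{-k}\Z^n\subseteq A_k$, which yields $\#(\operatorname{int}(S_P)\cap 2^{-k}\Z^n)\le|A_k|=|2^k\cdot P\cap\Z^n|$ and hence $\vol(S_P)\le\vol(P)$ in the limit. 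This inclusion rests on a cross-scale \emph{consistency} for the standard-monomial sets: for $j\ge k$, a level-$k$ dyadic point of $A_j$ should already lie in $A_k$, i.e. $2^{j-k}e'\in E_{2^j\cdot P\cap\Z^n}$ should imply $e'\in E_{2^k\cdot P\cap\Z^n}$. Granting consistency, the lower-set property of $A_k$ upgrades it to the coordinatewise floor statement $\lfloor 2^{-(j-k)}e\rfloor\in E_{2^k\cdot P\cap\Z^n}$ for all $e\in E_{2^j\cdot P\cap\Z^n}$, and since an interior point of $S_P$ is dominated coordinatewise by a nearby point of $\bigcup_j A_j$, the desired inclusion follows. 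Consistency itself is delicate: the natural attempts to transport a vanishing polynomial, or an interpolating function via \eqref{smast}, between the dilates $2^k\cdot P$ and $2^j\cdot P$ do not preserve the convex region $P$, so I expect the argument to be built genuinely on the superadditivity \eqref{East}.
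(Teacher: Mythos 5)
Your parts (a) and (b), the power-of-two scaling law, the dyadic case of the translation invariance (e), the monotonicity reduction to $0\in\operatorname{int}(P)$, and the lower bound $\vol(P)\leq\vol(S_P)$ in (c) all agree in substance with the paper's proof. The trouble is that you invert the paper's order of dependencies: the paper proves (c) directly from the definition of $S_P$, then deduces (d) from (c), then (e) from (d), whereas you attempt (d) and (e) before (c). This is not a cosmetic choice, and it breaks your proof of (d). Your squeeze needs the scaling law $S_{q\cdot P}=q\cdot S_P$ for all \emph{dyadic rationals} $q$, but reindexing the defining union only gives powers of two, and powers of two cannot squeeze (for $d=3$ you would only get $2\cdot S_P\subseteq S_{3\cdot P}\subseteq 4\cdot S_P$). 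For $q=a\cdot 2^{-b}$ the inclusion $q\cdot S_P\subseteq S_{q\cdot P}$ does follow from superadditivity \eqref{East}, but the reverse inclusion $S_{a\cdot P}\subseteq a\cdot S_P$ for integers $a$ --- which is exactly what the upper half of your squeeze requires --- is as hard as (d) itself. The paper obtains it by noting that $a\cdot S_P\subseteq S_{a\cdot P}$ are compact convex sets of the same volume $a^n\vol(P)$, i.e., by invoking (c). Since you postpone (c) until after (d), and your general case of (e) in turn invokes (d), this whole branch of your plan is circular as written.

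The second gap is in your route to $\vol(S_P)\leq\vol(P)$, which rests on the cross-scale consistency claim that $2^{j-k}e'\in E_{2^j\cdot P\cap\Z^n}$ implies $e'\in E_{2^k\cdot P\cap\Z^n}$. You correctly flag this as the main obstacle; in fact it is false. Take $n=1$, $P=[0.1,1.9]$, $k=0$, $j=1$: then $P\cap\Z=\{1\}$ gives $E_{P\cap\Z}=\{0\}$, while $2\cdot P\cap\Z=\{1,2,3\}$ gives $E_{2\cdot P\cap\Z}=\{0,1,2\}$, so $e'=1$ satisfies $2e'\in E_{2\cdot P\cap\Z}$ but $e'\notin E_{P\cap\Z}$. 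The same example refutes your target inclusion $\operatorname{int}(S_P)\cap 2^{-k}\Z^n\subseteq A_k$: here $S_P=[0,1.8]$ (since $A_k=2^{-k}\{0,\dots,|2^k\cdot P\cap\Z|-1\}$ and $|2^k\cdot P\cap\Z|/2^k\to 1.8$), so $1\in\operatorname{int}(S_P)\cap\Z$, yet $A_0=\{0\}$. What is true is only the $(1-\varepsilon)$-shrunk version of this inclusion, namely \Cref{cont_converges}, and the paper proves that theorem \emph{using} (c), so it cannot serve as your route to (c). The paper's actual argument avoids transporting membership in $E$ across scales altogether: for a finite lower set $L\subseteq\N^n$, the smallest lower subset $B(L)\subseteq\R_{\geq0}^n$ containing $L$ satisfies $\vol(B(L))\leq|L|$; applying this to $L=E_{2^k\cdot P\cap\Z^n}$, whose cardinality equals $|2^k\cdot P\cap\Z^n|$, and observing that $S_P$ is also the closure of the ascending union of the sets $2^{-k}\cdot B(E_{2^k\cdot P\cap\Z^n})$, one gets $\vol(S_P)\leq\vol(P)$ by pure counting. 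I recommend adopting that argument for (c) and then restoring the order (c), (d), (e); the rest of your proposal then goes through.
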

\begin{proof}
\begin{enumerate}[label=\alph*)]
\item Note that $S_P$ contains the midpoint of any two points in $S_P$:
\[S_P + S_P \subseteq S_{P+P} = S_{2\cdot P} = 2\cdot S_P.
\]
Since $S_P$ is closed, it is therefore convex.
\item Since $P$ is contained in an axis-parallel box, \Cref{cont_comb_nsts} shows that so is $S_P$. Hence, $S_P$ is bounded and in fact compact.
\item For any $k\geq0$, we have $2^{-k}\cdot E_{2^k\cdot P\cap\Z^n} \subseteq S_P\cap\N^n$ and therefore
\[
|2^k\cdot P\cap\Z^n| = |E_{2^k\cdot P\cap\Z^n}| \leq |2^k\cdot S_P\cap\N^n|.
\]
Since $P$ is bounded and convex, the left-hand side is
\[
|2^k\cdot P\cap\Z^n| = 2^{nk}\cdot\vol(P) + \O_P(2^{(n-1)k}).
\]
Since $S_P$ is also bounded and convex, the right-hand side is
\[
|2^k\cdot S_P\cap\N^n| = 2^{nk}\cdot\vol(S_P) + \O_P(2^{(n-1)k}).
\]
Hence, $\vol(P)\leq\vol(S_P)$.

For a finite lower subset $L$ of $\N^n$, let $B(L)$ be the smallest lower subset of $\R_{\geq0}^n$ containing $L$. It's easy to see that
\[\vol(B(L)) = |\{(e_1,\dots,e_n)\in L\mid e_1,\dots,e_n\geq1\}| \leq |L|.
\]
Now, $S_P$ is the closure of the union of the sets
\[
B(E_{P\cap\Z^n}) \subseteq 2^{-1}\cdot B(E_{2\cdot P\cap\Z^n}) \subseteq 2^{-2}\cdot B(E_{2^2\cdot P\cap\Z^n}) \subseteq \cdots,
\]
which have volume
\begin{align*}
&\vol(2^{-k}\cdot B(E_{2^k\cdot P\cap\Z^n})) \\
&= 2^{-nk}\cdot \vol(B(E_{2^k\cdot P\cap\Z^n})) \\
&\leq 2^{-nk}\cdot |E_{2^k\cdot P\cap\Z^n}| \\
&= 2^{-nk}\cdot |2^k\cdot P\cap\Z^n| \\
&= 2^{-nk}\cdot (2^{nk}\cdot\vol(P) + \O_P(2^{(n-1)k})) \\
&= \vol(P) + \O_P(2^{-k}).
\end{align*}
As these sets form an ascending chain, letting $k$ go to infinity, we see that
\[
\vol(2^{-k}\cdot B(E_{2^k\cdot P\cap\Z^n}))
\leq \vol(P)
\]
and then
\[
\vol(S_P) \leq \vol(P).
\qedhere
\]
\item We first show the equality for integers $d\geq1$. To this end, it suffices to note that $d\cdot S_P \subseteq d S_P\subseteq S_{dP}=S_{d\cdot P}$ and equality follows because both sides are compact convex sets of the same positive volume $d^n\cdot\vol(P)$.

Equality for all rational numbers $d>0$ follows immediately.

For irrational numbers, we make use of a monotonicity argument: As $P$ has positive volume and therefore contains an open ball, there is a number $T>0$ such that for all $t>T$, we have $t\cdot P\cap\Z^n\neq\emptyset$. Let $0<d_1<d_2$. For all $k\geq0$ sufficiently large that $2^k(d_2-d_1) > T$, there is a vector $v\in 2^k(d_2-d_1)\cdot P\cap\Z^n$. Then,
\begin{align*}
E_{2^kd_1\cdot P\cap\Z^n}
&= E_{2^kd_1\cdot P\cap\Z^n + v}
= E_{(2^kd_1\cdot P+v)\cap\Z^n} \\
&\subseteq E_{(2^kd_1\cdot P+2^k(d_2-d_1)\cdot P)\cap\Z^n}
= E_{2^kd_2\cdot P\cap\Z^n}.
\end{align*}
Hence, $S_{d_1\cdot P}\subseteq S_{d_2\cdot P}$ whenever $0<d_1<d_2$. If $d>0$ is any real number, we have shown for each rational number $0<d'<d$ that $d'\cdot S_P = S_{d'\cdot P} \subseteq S_{d\cdot P}$. Since $S_{d\cdot P}$ is closed, this implies that $d\cdot S_P\subseteq S_{d\cdot P}$. The opposite inclusion follows by replacing $d$ and $P$ by $d^{-1}$ and $d\cdot P$.
\item If $v\in\Z^n$, then $E_{2^k\cdot P\cap\Z^n}=E_{2^k\cdot P\cap\Z^n+2^k\cdot v}=E_{2^k\cdot(P+v)\cap\Z^n}$ for all $k\geq0$, so $S_P=S_{P+v}$. For arbitrary $v\in\R^n$, equality then follows using~\ref{Sscaling} and the fact that $\R\otimes_\Z\Z^n=\R^n$.
\qedhere
\end{enumerate}
\end{proof}

The following is a precise form of the claimed approximation (\ref{approximation}). 

\begin{theorem}\label{cont_converges}
For any $0<\varepsilon\leq1$, there is a number $D>0$ such that for all real numbers $d>D$, we have
\[
(1-\varepsilon)d\cdot S_P\cap\N^n \subseteq E_{d\cdot P\cap\Z^n} \subseteq d\cdot S_P\cap\N^n.
\]
\end{theorem}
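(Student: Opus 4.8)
The plan is to prove the two inclusions separately; the right-hand one is formal and the left-hand one carries all the content. For $E_{d\cdot P\cap\Z^n}\subseteq d\cdot S_P\cap\N^n$: since $d\cdot P$ is convex, the chain defining $S_{d\cdot P}$ ascends and its $k=0$ term is $E_{d\cdot P\cap\Z^n}$ itself, so $E_{d\cdot P\cap\Z^n}\subseteq S_{d\cdot P}=d\cdot S_P$ by \Cref{cont_basic}; intersecting with $\N^n$ (which contains $E_{d\cdot P\cap\Z^n}$) gives the claim, for every $d>0$, with no threshold needed.

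For the lower inclusion I would approximate $S_P$ from the inside by a single finitely generated sub-monoid. Write $E^{(k)}=E_{2^k\cdot P\cap\Z^n}$ and $C_k=\hull(2^{-k}\cdot E^{(k)})$. As $S_P$ is convex (\Cref{cont_basic}) and the closure of the increasing union of the $2^{-k}\cdot E^{(k)}$, the bodies $C_k$ increase to $S_P$. Because $P$ has positive volume, $2^k\cdot P$ contains arbitrarily large axis-parallel boxes, so by \Cref{lower} the set $E^{(k)}$ contains $0$ and all standard basis vectors $u_1,\dots,u_n$ for $k$ large, whence $E^{(k)}$ generates $\Z^n$. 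Fix such a $k=k_0$ for which moreover $C_{k_0}\supseteq(1-\tfrac\varepsilon2)\cdot S_P$; securing this last inclusion is the crux, addressed below.

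Given $e\in(1-\varepsilon)d\cdot S_P\cap\N^n$, the point $\frac{e}{(1-\varepsilon/2)d}$ lies in $\frac{1-\varepsilon}{1-\varepsilon/2}\cdot S_P\subseteq(1-\tfrac\varepsilon2)\cdot S_P\subseteq C_{k_0}$ and so is a convex combination of finitely many $2^{-k_0}f^{(j)}$ with $f^{(j)}\in E^{(k_0)}$; clearing denominators gives $e=\sum_j c_j f^{(j)}$ with $c_j\ge0$ and $\sum_j c_j=(1-\tfrac\varepsilon2)d\,2^{-k_0}$. Rounding to $m_j=\lfloor c_j\rfloor$ leaves an integer residual $r=e-\sum_j m_j f^{(j)}\in\N^n$ of size $\O_{k_0}(1)$, which I absorb using the generators $u_i\in E^{(k_0)}$; the total generator count $M$ then satisfies $M\le(1-\tfrac\varepsilon2)d\,2^{-k_0}+\O_{k_0}(1)\le d\,2^{-k_0}$ once $d>D(\varepsilon,P,k_0)$, since the gain $\tfrac\varepsilon2 d\,2^{-k_0}$ outgrows the bounded residual. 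As $0\in E^{(k_0)}$, this exhibits $e$ in the $M$-fold sumset of $E^{(k_0)}$ with $M2^{k_0}\le d$. Iterating (\ref{East}) and using that the $M$-fold Minkowski sum of the convex body $2^{k_0}\cdot P$ is $(M2^{k_0})\cdot P$, the sumset lies in $E_{(M2^{k_0})\cdot P\cap\Z^n}$; and since I can keep the gap $d-M2^{k_0}$ above a fixed $L(P)$, the homothet $(M2^{k_0})\cdot P$ has an integer translate inside $d\cdot P$, so the translation invariance and monotonicity of the sets $E_A$ give $e\in E_{d\cdot P\cap\Z^n}$.

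The main obstacle is the inner approximation $C_{k_0}\supseteq(1-\tfrac\varepsilon2)\cdot S_P$ \emph{up to the coordinate-hyperplane faces}. For points in the interior of $S_P$ a compactness argument suffices, as interior points lie in $C_k$ for large $k$; but the faces $S_P\cap\{x_i=0\}$ are never reached through the interiors of the $C_k$, so a target $e$ with a vanishing coordinate forces me to know that $C_{k_0}\cap\{x_i=0\}$ already approximates $S_P\cap\{x_i=0\}$. I would resolve this by induction on $n$, matching $E^{(k)}\cap\{x_i=0\}$ with the standard-monomial set of an $(n-1)$-dimensional slice and identifying $S_P\cap\{x_i=0\}$ as its limit body, thereby reducing the face case to the theorem in dimension $n-1$. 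Making this compatibility precise, and balancing the rounding budget against the integer-translation gap that fixes $D$, is where the genuine work lies; the remaining Shapley--Folkman-style rounding estimate is routine.
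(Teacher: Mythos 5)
Your upper inclusion is exactly the paper's own argument (the $k=0$ term of the defining chain plus $S_{d\cdot P}=d\cdot S_P$ from \Cref{cont_basic}), and the rounding-and-translation skeleton for the lower inclusion is sound as far as it goes: Carath\'eodory bounds the number of summands so the residual is bounded uniformly in $d$ and $e$, iterating (\ref{East}) puts the $M$-fold sumset of $E_{2^{k_0}\cdot P\cap\Z^n}$ inside $E_{(M2^{k_0})\cdot P\cap\Z^n}$, and the final integer-translation step is the same trick the paper uses in the proof of \Cref{cont_basic}. But the step you yourself flag as the crux, $C_{k_0}\supseteq(1-\tfrac\varepsilon2)\cdot S_P$, is a genuine gap, and the repair you sketch does not work. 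The difficulty is real: scaling about the origin maps the faces of $S_P$ on the coordinate hyperplanes into themselves, so $(1-\tfrac\varepsilon2)\cdot S_P$ is \emph{not} a compact subset of the interior of $S_P$, and lattice points $e$ with some $e_i=0$ genuinely occur in $(1-\varepsilon)d\cdot S_P\cap\N^n$. Your proposed fix --- induction on $n$ by ``matching $E^{(k)}\cap\{x_i=0\}$ with the standard-monomial set of an $(n-1)$-dimensional slice'' --- is a lexicographic-order phenomenon (\Cref{discretelex,contlex}); it fails for the degree-lexicographic order in force here. For instance, for $A=\{(0,0),(1,2),(2,1)\}$ one computes $E_A=\{(0,0),(1,0),(0,1)\}$ in degree-lex, so $E_A\cap\{x_2=0\}$ has two elements even though all three fibers of $A$ over the first coordinate have $0$ in their standard-monomial set; the slice recursion, which is correct for lex, would predict $(2,0)\in E_A$. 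Likewise, $S_P\cap\{x_i=0\}$ is the coordinate projection of $S_P$ (because $S_P$ is a lower set), not the $S$-body of any slice of $P$.

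The gap can be closed without any induction by exploiting the lower-set structure, which your argument never uses. Each $E^{(k)}$ is a lower subset of $\N^n$, hence $C_k=\hull(2^{-k}\cdot E^{(k)})$ is a lower subset of $\R_{\geq0}^n$: if $e\in E^{(k)}$ and $0\leq s\leq e_i$, then $e-su_i$ is a convex combination of $e-\lfloor s\rfloor u_i$ and $e-\lceil s\rceil u_i$, both in $E^{(k)}$. Moreover $S_P$ is a closed lower set of positive volume, so it contains a box $[0,a]^n$ with $a>0$, and convexity gives $(1-\tfrac\varepsilon2)\cdot S_P+\tfrac\varepsilon2\cdot[0,a]^n\subseteq S_P$. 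Hence $K:=(1-\tfrac\varepsilon2)\cdot S_P+\tfrac{\varepsilon a}4\cdot(1,\dots,1)$ is a compact subset of the interior of $S_P$, so your compactness argument yields $K\subseteq C_{k_0}$ for some $k_0$; since $C_{k_0}$ is a lower set and every $x\in(1-\tfrac\varepsilon2)\cdot S_P$ satisfies $x\preceq x+\tfrac{\varepsilon a}4\cdot(1,\dots,1)\in K$, you get $(1-\tfrac\varepsilon2)\cdot S_P\subseteq C_{k_0}$, and the rest of your proof closes. For comparison, the paper's proof bypasses all of this with a short counting contradiction: if $e\in(1-\varepsilon)d\cdot S_P\cap\N^n$ were missing from the lower set $E_{d\cdot P\cap\Z^n}$, that set would avoid $e+\N^n$, but convexity gives $S_P\cap(\tfrac1d\cdot e+\R_{\geq0}^n)\supseteq\tfrac1d\cdot e+\varepsilon\cdot S_P$, of volume $\varepsilon^n\vol(P)>0$, which is incompatible with $|E_A|=|A|$ and $\vol(S_P)=\vol(P)$ once $d$ is large. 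So even after repair your route is substantially longer, though it has the merit of being constructive: it exhibits $e$ as $\sm$ of an explicit convolution.
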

\begin{proof}
We have
\[
E_{d\cdot P\cap\Z^n} \subseteq S_{d\cdot P} \cap \N^n = d\cdot S_P \cap \N^n
\]
for all $d>0$, so it only remains to prove the first containment.

If $e\in (1-\varepsilon)d\cdot S_P\cap\N^n$ doesn't lie in the lower subset $E_{d\cdot P\cap\Z^n}$ of $\N^n$, then
\begin{align*}
E_{d\cdot P\cap\Z^n}
&\subseteq S_{d\cdot P} \cap \N^n \setminus (e+\N^n) \\
&= d\cdot S_P \cap \N^n \setminus (e+\N^n) \\
&= d\cdot (S_P \setminus (\tfrac1d\cdot e+\R_{\geq0}^n)) \cap \N^n.
\end{align*}
For large $d$, the left-hand side has size
\[
|d\cdot P\cap\Z^n| = d^n\cdot\vol(P) + \O_P(d^{n-1})
\]
and the right-hand side has size
\[
d^n \cdot (\vol(S_P) - \vol(S_P \cap (\tfrac1d\cdot e+\R_{\geq0}^n))) + \O_P(d^{n-1}).
\]
As $\vol(S_P)=\vol(P)$, we conclude that
\[
\vol(S_P \cap (\tfrac1d\cdot e+\R_{\geq0}^n)) = \O_P(d^{-1}).
\]
But since $\tfrac1d\cdot e\in (1-\varepsilon)\cdot S_P$ and the set $S_P\subseteq\R_{\geq0}^n$ is convex,
\[
S_P \cap (\tfrac1d\cdot e+\R_{\geq0}^n) \supseteq \tfrac1d\cdot e + \varepsilon\cdot S_P
\]
and in particular
\[
\vol(S_P \cap (\tfrac1d\cdot e+\R_{\geq0}^n)) \geq \vol(\varepsilon\cdot S_P)>0.
\]
For large enough $d$, we arrive at a contradiction.
\end{proof}

\begin{corollary}\label{vw_converge}
For any bounded convex set $P\subseteq\R^n$ of positive volume, let $r_P$ be the smallest degree of a polynomial vanishing on $P\cap\Z^n$ and let $s_P$ be the smallest number $u\geq0$ such that every function on $P\cap\Z^n$ can be interpolated by a polynomial of degree at most $u$. Then, $(r_{d\cdot P}-1)/d$ and $s_{d\cdot P}/d$ converge from below to some numbers $v_P,w_P>0$ as $d$ goes to infinity. These numbers satisfy $v_P^{n-1} w_P \leq n!\cdot \vol(P)$.
\end{corollary}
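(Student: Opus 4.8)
The plan is to translate the quantities $r_{d\cdot P}$ and $s_{d\cdot P}$ into statements about the degrees of (non)standard monomials and then read everything off from the set $S_P$. Writing $\Delta_t=\{x\in\R_{\geq0}^n\mid x_1+\dots+x_n\leq t\}$ for the dilated standard simplex, I would first record, using \cref{deg_E}, the identities
\[
r_{d\cdot P}=\min\{|e|\mid e\in\N^n\setminus E_{d\cdot P\cap\Z^n}\},\qquad s_{d\cdot P}=\max\{|e|\mid e\in E_{d\cdot P\cap\Z^n}\}.
\]
The first holds because a nonzero polynomial of minimal degree in $\I(d\cdot P\cap\Z^n)$ has a leading monomial of that same degree; the second because, for a degree-compatible order, the image of the degree-$\leq u$ polynomials in $\R[X_1,\dots,X_n]/\I(A)$ is exactly the span of the standard monomials of degree $\leq u$, so interpolation by degree $\leq u$ is possible for all functions iff every standard monomial has degree $\leq u$. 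I then define the two geometric constants
\[
v_P=\sup\{t\geq0\mid \Delta_t\subseteq S_P\},\qquad w_P=\max_{x\in S_P}|x|.
\]
Since $S_P$ is a compact convex lower set of positive volume, it contains a point with all coordinates positive, hence a box and hence some $\Delta_\delta$; this gives $v_P\geq\delta>0$ and $w_P\geq v_P>0$.

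Next I would prove $(r_{d\cdot P}-1)/d\to v_P$ and $s_{d\cdot P}/d\to w_P$ using \Cref{cont_converges}. The outer inclusion $E_{d\cdot P\cap\Z^n}\subseteq d\cdot S_P\cap\N^n$ immediately gives $s_{d\cdot P}\leq d\,w_P$ and, since $\Delta_{dv_P}\subseteq d\cdot S_P$, bounds $r_{d\cdot P}$ from above by the smallest degree of a lattice point just outside $d\cdot S_P$, which is $\leq d(v_P+\varepsilon)+\O_P(1)$. For the matching lower bounds I use $(1-\varepsilon)d\cdot S_P\cap\N^n\subseteq E_{d\cdot P\cap\Z^n}$: scaling $\Delta_{v_P}\subseteq S_P$ shows every lattice point of degree $\leq(1-\varepsilon)dv_P$ is standard, so $r_{d\cdot P}>(1-\varepsilon)dv_P$, while choosing a lattice point near $(1-\varepsilon)d\,x^\ast$ for a maximizer $x^\ast$ of $|x|$ on $S_P$ gives $s_{d\cdot P}\geq(1-\varepsilon)dw_P-\O_P(1)$. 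Letting $\varepsilon\to0$ yields both limits.

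To obtain convergence strictly \emph{from below}, I would use superadditivity. From (\ref{East}) we have $E_{d_1\cdot P\cap\Z^n}+E_{d_2\cdot P\cap\Z^n}\subseteq E_{(d_1+d_2)\cdot P\cap\Z^n}$. Adding maximal-degree standard monomials gives $s_{(d_1+d_2)\cdot P}\geq s_{d_1\cdot P}+s_{d_2\cdot P}$, while splitting any $e$ with $|e|\leq(r_{d_1\cdot P}-1)+(r_{d_2\cdot P}-1)$ as $e=e_1+e_2$ with $e_i\preceq e$ and $|e_i|\leq r_{d_i\cdot P}-1$ shows every such monomial is standard for $(d_1+d_2)\cdot P$, whence $r_{(d_1+d_2)\cdot P}-1\geq(r_{d_1\cdot P}-1)+(r_{d_2\cdot P}-1)$. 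Thus both $\tau_d:=s_{d\cdot P}$ and $\rho_d:=r_{d\cdot P}-1$ are superadditive; applying $\rho_{md}\geq m\rho_d$ and letting $m\to\infty$ (and likewise for $\tau$) gives $\rho_d/d\leq v_P$ and $\tau_d/d\leq w_P$ for every $d$.

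Finally, the inequality $v_P^{n-1}w_P\leq n!\cdot\vol(P)$ is the main geometric point, and I would prove it directly from $\vol(S_P)=\vol(P)$ (\Cref{cont_basic}). Let $F=\{x\in\R_{\geq0}^n\mid |x|=v_P\}$ be the far face of $\Delta_{v_P}\subseteq S_P$ and let $x^\ast\in S_P$ attain $|x^\ast|=w_P$. Being convex, $S_P$ contains both $\Delta_{v_P}$ and the pyramid $\hull(F\cup\{x^\ast\})$; since $|x|$ is linear, this pyramid lies in $\{|x|\geq v_P\}$ and so meets $\Delta_{v_P}$ only along $F$, so their volumes add. Computing the $(n-1)$-volume of $F$, which equals $\tfrac{\sqrt n}{(n-1)!}\,v_P^{n-1}$, and the height $(w_P-v_P)/\sqrt n$ of the apex gives the pyramid volume $v_P^{n-1}(w_P-v_P)/n!$, so
\[
\vol(P)=\vol(S_P)\geq\frac{v_P^{\,n}}{n!}+\frac{v_P^{\,n-1}(w_P-v_P)}{n!}=\frac{v_P^{\,n-1}w_P}{n!}.
\]
The main obstacle is not a single hard estimate but making the two descriptions of $v_P$ and $w_P$ — as limits of degrees and as geometric invariants of $S_P$ — coincide, and controlling the lower-order lattice-point errors from \Cref{cont_converges}; once this is in place, the volume bound follows cleanly from the pyramid decomposition.
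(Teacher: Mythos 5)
Your proposal is correct and takes essentially the same route as the paper: both characterize $r_{d\cdot P}$ and $s_{d\cdot P}$ via the sets $E_{d\cdot P\cap\Z^n}$, identify $v_P$ and $w_P$ as the largest inscribed-simplex size and the maximal coordinate sum of $S_P$, deduce convergence from \Cref{cont_basic} and \Cref{cont_converges}, and bound $\vol(P)=\vol(S_P)$ from below by the volume $v_P^{n-1}w_P/n!$ of the hull of $(0,\dots,0)$, $(v_P,0,\dots,0)$, \dots, $(0,\dots,0,v_P)$ and a point of coordinate sum $w_P$. Your extra touches --- the superadditivity argument for convergence from below (the paper instead reads this off the inclusion $E_{d\cdot P\cap\Z^n}\subseteq d\cdot S_P\cap\N^n$, whose convex hull already yields $(r_{d\cdot P}-1)/d\leq v_P$ and $s_{d\cdot P}/d\leq w_P$) and the explicit simplex-plus-pyramid volume computation that the paper only asserts with a figure --- are correct elaborations of steps the paper leaves implicit.
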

\begin{proof}
Let $\leq$ be a degree-lexicographic monomial order. Then, $r_P$ is the smallest degree $|e|$ of a monomial $X^e\in\lm(\mathcal I(P\cap\Z^n))$, or equivalently the smallest value $|e|$ for $e\notin E_{P\cap\Z^n}$. In other words, $r_P$ is the largest number such that
\[
\{e\in\N^n \mid |e|\leq r_P-1\} \subseteq E_{P\cap\Z^n}.
\]
By \Cref{cont_basic} and \Cref{cont_converges}, we see that $(r_{d\cdot P}-1)/d$ converges from below to the largest number $v_P$ such that
\[
\{e\in\R_{\geq0}^n \mid |e|\leq v_P\} \subseteq S_P.
\]
Similarly, $s_P$ is the largest number such that $E_{P\cap\Z^n}$ contains an element $e$ with $|e|=s_P$. Then, $S_{d\cdot P}/d$ converges from below to the largest number $w_P$ such that $S_P$ contains an element $e$ with $|e|=w_P$.

Now, $S_P$ contains the convex hull of some points
\[
(0,\dots,0),(v_P,0,\dots,0),(0,v_P,0,\dots,0),\dots,(0,\dots,0,v_P),(e_1,\dots,e_n)
\]
with $|e|=w_P\geq v_P$. (See \Cref{shapes}.) This convex hull has volume $\frac1{n!}\cdot v_P^{n-1}w_P$. Since $S_P$ has volume $\vol(P)$, we indeed have $v_P^{n-1}w_P\leq n!\cdot\vol(P)$.
\end{proof}

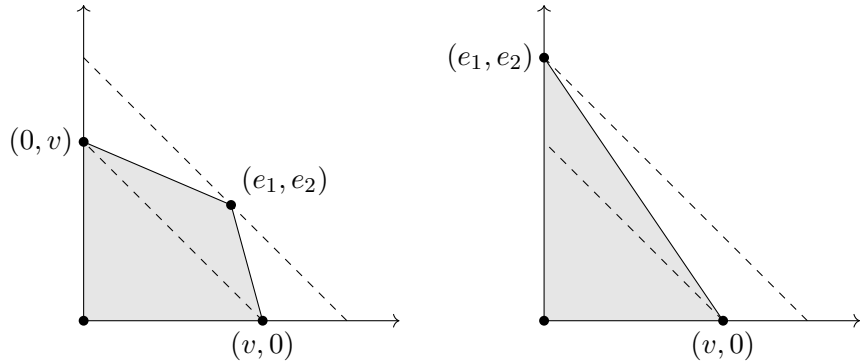
\begin{figure}[ht]
\begin{subfigure}[b]{0.45\textwidth}
\begin{tikzpicture}[radius=4/3pt,scale=1.4]
\fill[black!10] (0,0) -- (1.7,0) -- (1.4,1.1) -- (0,1.7) -- cycle;
\draw[->] (0,0) -- (3,0);
\draw[->] (0,0) -- (0,3);
\draw (1.7,0) -- (1.4,1.1) -- (0,1.7);
\fill (1.4,1.1) circle node[above right] {$(e_1,e_2)$};
\fill (1.7,0) circle node[below] {$(v,0)$};
\fill (0,1.7) circle node[left] {$(0,v)$};
\fill (0,0) circle;
\draw[dashed] (2.5,0) -- (0,2.5);
\draw[dashed] (1.7,0) -- (0,1.7);
\end{tikzpicture}
\end{subfigure}
\begin{subfigure}[b]{0.45\textwidth}
\begin{tikzpicture}[radius=4/3pt,scale=1.4]
\fill[black!10] (0,0) -- (1.7,0) -- (0,2.5) -- cycle;
\draw[->] (0,0) -- (3,0);
\draw[->] (0,0) -- (0,3);
\draw (1.7,0) -- (0,2.5);
\fill (0,0) circle;
\fill (1.7,0) circle node[below] {$(v,0)$};
\fill (0,2.5) circle node[left] {$(e_1,e_2)$};
\draw[dashed] (2.5,0) -- (0,2.5);
\draw[dashed] (1.7,0) -- (0,1.7);
\end{tikzpicture}
\end{subfigure}
\caption{Possible shapes of the lower bound for $S_P$}\label{shapes}
\end{figure}

For lexicographic monomial orders, the set $S_P$ can again be determined inductively. (See \Cref{contlex} in \cref{lex_section}.)

Computing $S_P$ for other monomial orders can be more difficult. At least, $S_P$ (and therefore $v_P$ and $w_P$) can be approximated to any precision:

\begin{remark}
For any $d>0$, we have
\begin{equation}\label{approx_bounds}
A_{P,d} \subseteq S_P \subseteq B_{P,d},
\end{equation}
where
\[
A_{P,d} = \hull(d^{-1}\cdot E_{d\cdot P\cap\Z^n})
\]
and
\[
B_{P,d} = \{e\in\R_{\geq0}^n \mid \vol(\hull(A_{P,d}\cup\{e\})) \leq \vol(P)\}.
\]
The approximations $A_{P,d}$ and $B_{P,d}$ converge to $S_P$ in the following sense: For any $\varepsilon>0$, there is a number $D>0$ such that for all real numbers $d>D$, we have
\begin{equation}\label{approx_conv_lower}
(1-\varepsilon)\cdot S_P \subseteq A_{P,d}
\end{equation}
and
\begin{equation}\label{approx_conv_upper}
B_{P,d} \subseteq (1+\varepsilon)\cdot S_P.
\end{equation}
\end{remark}
\begin{proof}
Firstly, (\ref{approx_bounds}) is clear: We have already shown the first inclusion, and the second inclusion follows since $S_P$ is convex and $\vol(S_P)=\vol(P)$.

We have shown (\ref{approx_conv_lower}) in \Cref{cont_converges}. Then, (\ref{approx_conv_upper}) follows from (\ref{approx_conv_lower}) and a simple compactness argument.
\end{proof}

\section{Lexicographic monomial orders}\label{lex_section}

This section can be skipped by readers interested only in degree-le\-xi\-co\-graphic mo\-no\-mi\-al orders.

As explained in \cite{lex-game}, the set $E_A$ can be determined inductively for any lexicographic monomial order:

\begin{theorem}\label{discretelex}
Assume that $\leq$ is the lexicographic monomial order with $X_1<\cdots<X_n$. Let $A\subseteq \R^n$ be a finite set. For any $a_1\in \R$, consider the fiber
\[
B(a_1) = \{(a_2,\dots,a_n) \mid (a_1,a_2\dots,a_n)\in A\} \subseteq \R^{n-1}.
\]
We have $e=(e_1,\dots,e_n)\in E_A$ if and only if
\begin{equation}\label{lexeq}
|\{a_1\in \R\mid (e_2,\dots,e_n) \in E_{B(a_1)}\}| > e_1,
\end{equation}
where we use the ``restricted'' lexicographic monomial order with $X_2<\cdots<X_n$ on $\R^{n-1}$ to define $E_{B(a_1)}$.
\end{theorem}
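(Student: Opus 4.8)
The plan is to exploit the description $E_A=\{\sm(f)\mid 0\neq f\in F_A\}$ from the lemma above, together with the fact that in this order $X_1$ is the \emph{least} significant variable (indeed $X_1<X_2$ as monomials forces the comparison to be decided by the largest index at which two exponent vectors differ). Consequently, to compute $\sm(f)$ one first resolves the ``tail'' $e'=(e_2,\dots,e_n)$ and only afterward the exponent $e_1$. I abbreviate $X'=(X_2,\dots,X_n)$, let $V\subseteq\R$ be the set of first coordinates of points of $A$, and decompose each $f\in F_A$ as a tuple $(f_v)_{v\in V}$ with $f_v\in F_{B(v)}$ given by $f_v(a')=f(v,a')$.

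First I would record the fiber expansion
\[
\langle f,X^{(e_1,e')}\rangle=\sum_{v\in V}v^{e_1}\,\langle f_v,X'^{e'}\rangle,
\]
and read off from the lexicographic order that $\sm(f)=(e_1,e')$ holds exactly when both: (i) $\langle f_v,X'^{\tilde e'}\rangle=0$ for every $v\in V$ and every tail $\tilde e'<e'$; and (ii) the one-variable function $\phi\in F_V$ defined by $\phi(v)=\langle f_v,X'^{e'}\rangle$ satisfies $\sm(\phi)=e_1$. Here (i) encodes the vanishing of $f$ against all monomials with strictly smaller tail (this is where a Vandermonde argument on the distinct $v\in V$ turns the joint vanishing over all $\tilde e_1$ into fiberwise conditions), while (ii) uses $\langle f,X^{(\tilde e_1,e')}\rangle=\langle\phi,X_1^{\tilde e_1}\rangle$. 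Crucially, (i) is fiberwise: it says $f_v\in H_{e'}(v):=\{g\in F_{B(v)}\mid\langle g,X'^{\tilde e'}\rangle=0\ \text{for all}\ \tilde e'<e'\}$ for each $v$.

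The key step is to determine, for a fixed tail $e'$, exactly which leading-coefficient functions $\phi$ occur. The assignment $\psi\colon(f_v)_v\mapsto(\langle f_v,X'^{e'}\rangle)_v$ is linear on $\bigoplus_{v}H_{e'}(v)$ and splits as a direct sum of functionals $\psi_v\colon H_{e'}(v)\to\R$, so its image consists of all $\phi$ with $\phi(v)\in\operatorname{im}(\psi_v)$ for every $v$. Now $\operatorname{im}(\psi_v)=\R$ precisely when some $g\in H_{e'}(v)$ has $\langle g,X'^{e'}\rangle\neq0$; together with the defining vanishing of $H_{e'}(v)$ this means some nonzero $g\in F_{B(v)}$ has $\sm(g)=e'$ in the restricted order, i.e.\ $e'\in E_{B(v)}$. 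Hence $\operatorname{im}(\psi)=F_{V_{e'}}$ with $V_{e'}=\{v\in\R\mid e'\in E_{B(v)}\}$. Combining with (ii), the set of admissible first exponents is
\[
\{e_1\mid(e_1,e')\in E_A\}=\{\sm(\phi)\mid 0\neq\phi\in F_{V_{e'}}\}=E_{V_{e'}},
\]
which by the one-variable case of \Cref{combinatorial_nullstellensatz} equals $\{0,1,\dots,|V_{e'}|-1\}$. Thus $(e_1,e')\in E_A$ if and only if $e_1<|V_{e'}|=|\{v\in\R\mid e'\in E_{B(v)}\}|$, which is precisely~(\ref{lexeq}).

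I expect the main obstacle to be justifying the clean separation of variables in the key step: one must verify that condition (i) is genuinely fiberwise (via Vandermonde nondegeneracy of the distinct first coordinates) and that the leading-coefficient map $\psi$ really splits as a direct sum, so that its image is the full ``coordinate box'' $F_{V_{e'}}$ rather than a proper subspace. Once this is in place, the reduction of the $e_1$-count to the single-variable identity $E_{V_{e'}}=\{0,\dots,|V_{e'}|-1\}$ is immediate, and the remaining work is only the bookkeeping of which fiber coefficients may be prescribed independently.
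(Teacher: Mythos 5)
Your proof is correct, and it takes a genuinely different route from the paper's. The paper argues on the primal (ideal) side: writing $E_A'$ for the set defined by (\ref{lexeq}), it proves only the inclusion $E_A'\subseteq E_A$, by contradiction --- if some $p\in\I(A)$ had $\lm(p)=X^e$, then the $X_2^{e_2}\cdots X_n^{e_n}$-coefficient of $p(t,X_2,\dots,X_n)$ is a degree-$e_1$ polynomial in $t$, hence nonzero at one of the more than $e_1$ values $t_i$ with $(e_2,\dots,e_n)\in E_{B(t_i)}$, yielding a polynomial with leading monomial $X_2^{e_2}\cdots X_n^{e_n}$ vanishing on $B(t_i)$, which is impossible --- and then gets the reverse inclusion for free from the counting identity $|E_A'|=\sum_{a_1}|E_{B(a_1)}|=\sum_{a_1}|B(a_1)|=|A|=|E_A|$. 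You instead work on the dual (function) side via $E_A=\{\sm(f)\mid 0\neq f\in F_A\}$ and prove both directions at once by a fiberwise linear-algebra analysis: Vandermonde nondegeneracy converts vanishing against all smaller-tail monomials into the fiberwise conditions $f_v\in H_{e'}(v)$, the leading-coefficient map $\psi$ splits as a direct sum of functionals so that $\operatorname{im}(\psi)=F_{V_{e'}}$ exactly, and the admissible first exponents form $E_{V_{e'}}=\{0,\dots,|V_{e'}|-1\}$ by the one-variable case of \Cref{combinatorial_nullstellensatz}. The paper's proof is shorter because the global count $|E_A|=|A|$ spares it the harder implication; yours is constructive: it exhibits, for each admissible $(e_1,e')$, an explicit witness $f$ (lift any $\phi\in F_{V_{e'}}$ with $\sm(\phi)=e_1$ through $\psi$), and it localizes the root-counting --- your Vandermonde step plays precisely the role of the paper's observation that a degree-$e_1$ polynomial has at most $e_1$ roots. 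The two ``obstacles'' you flag (fiberwise nature of condition (i), and the direct-sum description of $\operatorname{im}(\psi)$) are real but discharge exactly as you indicate, so no gap remains.
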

\begin{proof}
Let $E_A'$ be the set of $e\in\N^n$ satisfying (\ref{lexeq}). Let $e\in E_A'$ and choose distinct $t_1,\dots,t_m\in \R$ such that $(e_2,\dots,e_n)\in E_{B(t_i)}$ for $i=1,\dots,m$, with $m>e_1$. Assume $e\notin E_A$, so there is a polynomial $p\in \R[X_1,\dots,X_n]$ with $\lm(p)=X^e=X_1^{e_1}\cdots X_n^{e_n}$ that vanishes at every point in $A$. Since $m>e_1$, the $X_2^{e_2}\cdots X_n^{e_n}$-coefficient of $p(t_i,X_2,\dots,X_n)$ is nonzero for some $i$. Since we are considering the lexicographic monomial order with $X_1<\cdots<X_n$, it follows that $X_2^{e_2}\cdots X_n^{e_n}$ is the leading monomial of $p(t_i,X_2,\dots,X_n)$. By definition, since $(e_2,\dots,e_n)\in E_{B(t_i)}$, this polynomial cannot vanish on $B(t_i)$, so $p$ cannot vanish on $\{t_i\}\times B(t_i)\subseteq A$. This is a contradiction, so $E_A'\subseteq E_A$.

It only remains to show that both sets have the same size:
\begin{align*}
|E_A'|
&= \sum_{(e_2,\dots,e_n)\in\N^{n-1}} |\{a_1\in \R \mid (e_2,\dots,e_n)\in E_{B(a_1)}\}| \\
&= \sum_{a_1\in \R} |E_{B(a_1)}| = \sum_{a_1\in \R} |B(a_1)| = |A| = |E_A|.
\qedhere
\end{align*}
\end{proof}

We can similarly determine the sets $S_P$ for any lexicographic monomial order:

\begin{theorem}\label{contlex}
Assume that $\leq$ is the lexicographic monomial order with $X_1<\cdots<X_n$. Let $P\subseteq\R^n$ be a bounded convex set of positive volume. For any $v_1\in\R$, consider the fiber
\[
Q(v_1) = \{(v_2,\dots,v_n) \mid (v_1,v_2\dots,v_n)\in P\} \subseteq\R^{n-1}.
\]
We have $e=(e_1,\dots,e_n)\in S_P$ if and only if $G_{e_2,\dots,e_n}\neq\emptyset$ and $\vol(G_{e_2,\dots,e_n})\geq e_1$, where
\begin{equation}\label{contlexeq}
G_{e_2,\dots,e_n}=\{v_1\in\R\mid \vol(Q(v_1))>0 \textnormal{ and } (e_2,\dots,e_n) \in S_{Q(v_1)}\}
\end{equation}
and we use the ``restricted'' lexicographic monomial order with $X_2<\cdots<X_n$ on $\R^{n-1}$ to define $S_{Q(v_1)}$.
\end{theorem}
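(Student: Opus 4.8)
The plan is to prove the continuous statement by mirroring the proof of its discrete analogue, \Cref{discretelex}, and then passing to the limit using the machinery already established for $S_P$. The key conceptual point is that $S_P$ is, up to lower-order discrepancies, the scaling limit of $E_{d\cdot P\cap\Z^n}$, and that the lexicographic order "decouples" the first coordinate from the rest.

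First I would establish the relationship between the fibers of $d\cdot P$ and the fibers $Q(v_1)$ of $P$. For a lattice point $(a_1,a_2,\dots,a_n)\in d\cdot P\cap\Z^n$, the fiber $B(a_1)$ in \Cref{discretelex} is exactly $(d\cdot Q(a_1/d))\cap\Z^{n-1}$ after the obvious identification. Applying \Cref{discretelex} to $A = d\cdot P\cap\Z^n$, a point $e=(e_1,\dots,e_n)$ lies in $E_{d\cdot P\cap\Z^n}$ precisely when the number of integers $a_1$ for which $(e_2,\dots,e_n)\in E_{(d\cdot Q(a_1/d))\cap\Z^{n-1}}$ exceeds $e_1$. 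The heart of the argument is to count these integers $a_1$ asymptotically. By \Cref{cont_converges} applied in dimension $n-1$ to each fiber, the condition $(e_2,\dots,e_n)\in E_{(d\cdot Q(a_1/d))\cap\Z^{n-1}}$ is, for large $d$, essentially equivalent to $(e_2/d,\dots,e_n/d)\in S_{Q(a_1/d)}$, i.e.\ to $a_1/d\in G_{e_2/d,\dots,e_n/d}$. Thus the count in \eqref{lexeq} is approximately $d$ times the one-dimensional measure $\vol(G_{e_2/d,\dots,e_n/d})$, and the inequality $\text{count}>e_1$ becomes $\vol(G_{\dots})>e_1/d$ in the limit.

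Making this precise is where the main obstacle lies, for two intertwined reasons. The convergence in \Cref{cont_converges} is only guaranteed for fibers of positive $(n-1)$-dimensional volume, and crucially the threshold $D$ there depends on the fiber $Q(a_1/d)$; as $a_1/d$ approaches the boundary of the projection of $P$ onto the first axis, $Q(a_1/d)$ degenerates and the convergence ceases to be uniform. I would handle this by a monotonicity-and-squeezing argument: since $S_{Q(v_1)}$ grows as $Q(v_1)$ grows (property (b) of $S$) and the fibers $Q(v_1)$ vary monotonically in an appropriate sense on a convex set, the set $G_{e_2,\dots,e_n}$ is itself an interval (or a "nice" set whose boundary contributes negligibly). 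The contribution of the thin boundary layer of $a_1$-values where the fiber is nearly degenerate has measure $o(d)$ and so does not affect the leading-order count, provided one fixes $e/d$ in the interior; the boundary cases $\vol(G)=e_1$ and $G=\emptyset$ are exactly the knife-edge that the "from below" convergence and the closure in the definition of $S_P$ are designed to absorb.

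Finally I would assemble the two inclusions. For the forward direction, I would show that if $e\in S_P$, then (by \Cref{cont_converges} in dimension $n$) the rescaled points $e^{(d)}$ near $d\cdot(\tfrac1d e)$ lie in $E_{d\cdot P\cap\Z^n}$ for a suitable sequence $d\to\infty$, whence the count estimate above forces $\vol(G_{e_2,\dots,e_n})\geq e_1$ and $G_{e_2,\dots,e_n}\neq\emptyset$. For the reverse direction, I would argue contrapositively: if either $G_{e_2,\dots,e_n}=\emptyset$ or $\vol(G_{e_2,\dots,e_n})<e_1$, then the count in \eqref{lexeq} stays strictly below $e_1 d$ for all large $d$, so the corresponding scaled lattice points are eventually excluded from $E_{d\cdot P\cap\Z^n}$, placing $e$ outside $\bigcup_k 2^{-k}E_{2^k P\cap\Z^n}$ and hence outside its closure $S_P$. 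The interval structure of $G_{e_2,\dots,e_n}$ and the continuity of $v_1\mapsto\vol(Q(v_1))$ and of the containment condition in $v_1$ are what make the measure $\vol(G_{e_2,\dots,e_n})$ well-defined and the limiting inequalities clean; I expect verifying this regularity of $G_{e_2,\dots,e_n}$ to be the most delicate bookkeeping in the proof.
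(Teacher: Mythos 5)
Your setup (apply \Cref{discretelex} to $d\cdot P\cap\Z^n$, identify the fiber over $a_1$ with $(d\cdot Q(a_1/d))\cap\Z^{n-1}$, and count the integers $a_1$) matches the first half of the paper's proof, but your final assembly has a genuine logical gap: your ``forward direction'' ($e\in S_P$ implies the $G$-condition) and your ``reverse direction'' (failure of the $G$-condition implies $e\notin S_P$) are contrapositives of each other, so you prove only one implication of the theorem, namely $S_P\subseteq S_P'$, where $S_P'$ denotes the set cut out by the $G$-condition. The converse implication --- that $G_{e_2,\dots,e_n}\neq\emptyset$ and $\vol(G_{e_2,\dots,e_n})\geq e_1$ force $e\in S_P$ --- is never addressed, and it is precisely the direction for which your counting scheme is hard to run: to manufacture lattice points of $E_{d\cdot P\cap\Z^n}$ near $d\cdot e$ you must convert membership $(e_2,\dots,e_n)\in S_{Q(v_1)}$ into membership of nearby integer points in $E_{(d\cdot Q(v_1))\cap\Z^{n-1}}$, i.e.\ you need the lower containment of \Cref{cont_converges} \emph{uniformly} over the continuum of fibers $Q(v_1)$. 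That is exactly the non-uniformity you flagged, but your proposed remedy is shaky: the fibers of a convex set are not monotone (nested) in $v_1$; they only satisfy the concavity relation $t\cdot Q(r)+(1-t)\cdot Q(s)\subseteq Q(tr+(1-t)s)$, which yields convexity of $G$ and the interval structure of its horizontal fibers, but no squeezing. Note, by contrast, that the implication you did prove never needs \Cref{cont_converges} at all --- only the trivial containment $E_{Q(v_1)\cap\Z^{n-1}}\subseteq S_{Q(v_1)}$ --- so your machinery is mismatched to where the difficulty sits.

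The paper sidesteps the missing direction entirely. It proves $S_P\subseteq S_P'$ as you do, but exactly (no approximate points): for an integer point $e\in E_{P\cap\Z^n}$, \Cref{discretelex} produces at least $e_1+1$ integers $v_1$ with $(e_2,\dots,e_n)\in E_{B(v_1)}\subseteq S_{Q(v_1)}$, these integers lie in the interval $G_{e_2,\dots,e_n}$ (after rescaling $P$ so the extreme abscissas are irrational, to rule out degenerate fibers at integers), hence that interval has length at least $e_1$; scaling invariance $S_{d\cdot P}'=d\cdot S_P'$ and closedness of $S_P'$ (proved via the convexity of $G$ and the closedness of its vertical fibers $S_{Q(v_1)}$ --- this is the rigorous form of your ``delicate bookkeeping'') then give $S_P\subseteq S_P'$. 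The reverse inclusion is then free: by Fubini and \Cref{cont_basic}, $\vol(S_P')=\int_\R\vol(S_{Q(v_1)})\,dv_1=\int_\R\vol(Q(v_1))\,dv_1=\vol(P)=\vol(S_P)$, and two nested compact convex sets of equal positive volume coincide. To repair your proof, either adopt this volume identity for the converse direction, or be prepared to prove a uniform-in-the-fiber version of \Cref{cont_converges}, which is substantially more work than the rest of the argument.
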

\begin{proof}
The set
\[
G := \{(v_1,e_2,\dots,e_n) \mid \vol(Q(v_1))>0\textnormal{ and }(e_2,\dots,e_n)\in S_{Q(v_1)}\}
\]
is convex: For any $r,s\in\R$ and $0\leq t\leq1$, the convexity of $P$ implies that
\[
t\cdot Q(r)+(1-t)\cdot Q(s) \subseteq Q(tr+(1-t)s).
\]
Hence, if $\vol(Q(r))>0$ and $\vol(Q(s))>0$, then $\vol(Q(tr+(1-t)s))>0$ and
\[
t\cdot S_{Q(r)}+(1-t)\cdot S_{Q(s)} \subseteq S_{Q(tr+(1-t)s)}.
\]
Therefore, $G$ is indeed convex. It is clearly bounded. In particular, each horizontal fiber $G_{e_2,\dots,e_n}$ is an interval. Let $S_P'$ be the set of $e\in\R_{\geq0}^n$ with $G_{e_2,\dots,e_n}\neq\emptyset$ and $\vol(G_{e_2,\dots,e_n})\geq e_1$. It is a bounded convex lower subset of $\R_{\geq0}^n$. Note that $S_{d\cdot P}' = d\cdot S_P'$ for all real numbers $d>0$.

It's not hard to show that since $G$ is convex and each vertical fiber $S_{Q(v_1)}$ of $G$ is closed, 
\[
\ol{G_{e_2,\dots,e_n}} = \{v_1\in\R \mid (v_1,e_2,\dots,e_n)\in \ol{G}\}.
\]
This implies that $S_P'$ is closed.

Let $a$ and $b$ be the infimum and supremum, respectively, of the $x_1$-co\-or\-di\-nates of points $(x_1,\dots,x_n)\in P$. Since $P$ is convex and has positive volume, we have $\vol(Q(v_1)) > 0$ for all $a<v_1<b$. Replacing $P$ by $d\cdot P$ for some real number $d>0$, we can assume without loss of generality that $a$ and $b$ are irrational.

We now check that $E_{2^k\cdot P\cap\Z^n}\subseteq S_{2^k\cdot P}' = 2^k\cdot S_P'$ for all $k\geq0$, and therefore $S_P\subseteq S_P'$. Without loss of generality, we can assume $k=0$. Let $e\in E_{P\cap\Z^n}$. According to \Cref{discretelex}, the set
\[
J_{e_2,\dots,e_n} := \{v_1\in\Z \mid (e_2,\dots,e_n) \in E_{B(v_1)}\}
\]
with $B(v_1) = Q(v_1)\cap\Z^{n-1}$ has size at least $e_1+1$.

On the other hand, $J_{e_2,\dots,e_n}$ is contained in the interval $G_{e_2,\dots,e_n}$: Firstly, we have $\vol(Q(v_1))>0$ for all $v_1\in J_{e_2,\dots,e_n}$ as the only possible elements $v_1\in\R$ with $\vol(Q(v_1))=0$ but $Q(v_1)\neq\emptyset$ are $a$ and $b$, neither of which is an integer. Secondly, $(e_2,\dots,e_n)\in S_{Q(v_1)}$ because $E_{B(v_1)} = E_{Q(v_1)\cap\Z^{n-1}} \subseteq S_{Q(v_1)}$.

Therefore, the interval $G_{e_2,\dots,e_n}$ contains $e_1+1$ integers, so it must have length at least $e_1$. Hence,  indeed $e\in S_P'$.

We have now shown that $S_P\subseteq S_P'$. Both sides are compact convex sets of the same positive volume:
\begin{align*}
\vol(S_P')
&= \int_{\R_{\geq0}^{n-1}} \vol(\{a<v_1<b \mid (e_2,\dots,e_n)\in S_{Q(v_1)}\}) \dd(e_2,\dots,e_n) \\
&= \int_{\R} \vol(S_{Q(v_1)}) \dd v_1 = \int_{\R} \vol(Q(v_1)) \dd v_1 = \vol(P) \\
&= \vol(S_P).
\qedhere
\end{align*}
\end{proof}

\begin{remark}\label{other_lattices}
Most results in this note can be extended to lattices $\Lambda\subseteq\R^n$ of rank $n$ other than $\Lambda=\Z^n$, with one caveat: Let $n=2$ and replace $\Z^2$ by the lattice $\Lambda\subseteq\R^2$ spanned by $(1,0)$ and $(\sqrt2,1)$. Assume $\leq$ is the lexicographic monomial order with $X_1<X_2$. Then $S_P=\R_{\geq0}\times\{0\}$ is unbounded for all bounded convex sets $P\subseteq\R^2$ of positive volume: Since no two points in $d\cdot P\cap\Z^2\subseteq\Z^2$ have the same $X_1$-coordinate, \Cref{discretelex} tells us that
\[E_{d\cdot P\cap\Z^2}=\{0,\dots,|d\cdot P\cap\Z^2|-1\}\times\{0\}.
\]
The size $|d\cdot P\cap\Z^n|=d^2\cdot\vol(P)+\O_P(d)$ grows quadratically in~$d$, so $S_P=\R_{\geq0}\times\{0\}$ is unbounded. However, almost all results in this note can be adapted for example when the monomial order is such that there are only finitely many monomials less than any given monomial.
\end{remark}

\section{Laurent polynomials}\label{laurent_section}

The ring $F_{\Z^n}$ is naturally isomorphic to the ring of Laurent polynomials $\R[X_1^{\pm1},\dots,X_n^{\pm1}]$. Here, a function $f\in F_{\Z^n}$ corresponds to the Laurent polynomial
\[
\pi_f = \sum_{a\in\Z^n} f(a) X^a.
\]

Let $\leq$ be a degree-lexicographic monomial order.

\begin{lemma}
Let $0\neq f\in F_{\Z^2}$. Then, $|\sm(f)|$ is the order of vanishing of $\pi_f$ at the point $(1,\dots,1)\in \R^n$.
\end{lemma}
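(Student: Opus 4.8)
The plan is to expand $\pi_f$ in local coordinates at $(1,\dots,1)$ and read off its lowest-degree term. Substituting $X_i=1+Y_i$ and using the generalized binomial expansion $(1+Y_i)^{a_i}=\sum_{k\geq0}\binom{a_i}{k}Y_i^k$, which is valid as a formal power series in $Y_i$ for every integer $a_i$ (including negative ones), I would write
\[
\pi_f(1+Y_1,\dots,1+Y_n)=\sum_{e\in\N^n}c_e\,Y^e,
\qquad
c_e=\sum_{a\in\Z^n}f(a)\prod_{i=1}^n\binom{a_i}{e_i}.
\]
The key observation is that $\prod_i\binom{a_i}{e_i}$ is the value at $a$ of the falling-factorial polynomial $b_e(X):=\prod_i\binom{X_i}{e_i}$, where $\binom{X_i}{e_i}=\frac{1}{e_i!}X_i(X_i-1)\cdots(X_i-e_i+1)$, so that $c_e=\langle f,b_e\rangle$. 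Since $f$ has finite support, each $c_e$ is a finite sum and the expansion is a well-defined formal power series; the order of vanishing of $\pi_f$ at $(1,\dots,1)$ is then $\min\{|e|\mid c_e\neq0\}$. (As $(1,\dots,1)$ is a unit, one may alternatively multiply $\pi_f$ by a suitable monomial to reduce to an honest polynomial without changing this order.)

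Next I would establish the lower bound $\min\{|e|\mid c_e\neq0\}\geq|\sm(f)|$. Because $\leq$ compares monomials first by total degree, every monomial of total degree less than $|\sm(f)|$ is strictly smaller than $X^{\sm(f)}$ and hence pairs to $0$ against $f$ by the definition of $\sm(f)$; by linearity $\langle f,p\rangle=0$ for every polynomial $p$ of degree $<|\sm(f)|$. As $b_e$ has total degree $|e|$, this forces $c_e=\langle f,b_e\rangle=0$ whenever $|e|<|\sm(f)|$.

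Finally I would check that the bound is attained at $e=\sm(f)$. The top-degree part of $b_{\sm(f)}$ is $\frac{1}{\sm(f)!}X^{\sm(f)}$, while all of its remaining terms have total degree $<|\sm(f)|$ and therefore pair to $0$ against $f$ by the previous paragraph. Hence $c_{\sm(f)}=\frac{1}{\sm(f)!}\langle f,X^{\sm(f)}\rangle$, which is nonzero by the defining property of $\sm(f)$, so the order of vanishing equals $|\sm(f)|$. The only genuinely delicate point is the bookkeeping in the first step: one must make sure that the binomial expansion and the identification $c_e=\langle f,b_e\rangle$ remain legitimate for Laurent (rather than ordinary) polynomials, i.e.\ in the presence of negative exponents. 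Everything afterwards is forced by the degree-compatibility of the order together with the defining property of $\sm(f)$.
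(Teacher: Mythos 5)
Your proof is correct, and it takes a genuinely different route from the paper's. The paper works on the operator side: it introduces $\delta_i=X_i\partial_i$, computes $\delta^e X^a=a^eX^a$ so that $\delta^e\pi_f(1,\dots,1)=\langle f,X^e\rangle$, and then shows by induction that $\delta^e=X^e\partial^e+\sum_{e\neq e'\preceq e}c_{e,e'}X^{e'}\partial^{e'}$, i.e.\ the operator families $\{\delta^e\}$ and $\{\partial^e\}$ are triangularly related, so the conditions ``$\partial^e\pi_f(1,\dots,1)=0$ for all $|e|<d$'' and ``$\langle f,X^e\rangle=0$ for all $|e|<d$'' are equivalent. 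You instead work on the polynomial side: you pass to local coordinates $Y_i=X_i-1$, expand by the binomial series, and identify the Taylor coefficients as $c_e=\langle f,b_e\rangle$ with the falling-factorial polynomials $b_e=\prod_i\binom{X_i}{e_i}$; the triangularity you exploit is that $b_e=\frac{1}{e!}X^e+(\text{terms of lower total degree})$. The two arguments are dual to one another --- indeed $\partial^e X^a\big|_{X=(1,\dots,1)}=e!\binom{a}{e}$, so your $c_e$ is exactly $\frac{1}{e!}\partial^e\pi_f(1,\dots,1)$, matching the paper's definition of the vanishing order --- and both use degree-compatibility of the monomial order at the same spot. The delicate point you flag (negative exponents) is genuinely harmless: the binomial series is precisely the Taylor expansion of $X^a$ at $(1,\dots,1)$, or, as you note, one multiplies by a monomial, which is a unit at that point. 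What your version buys is a more self-contained argument (no operator calculus) plus slightly finer information, namely the exact leading coefficient $c_{\sm(f)}=\frac{1}{\sm(f)!}\langle f,X^{\sm(f)}\rangle$ of the local expansion; what the paper's version buys is brevity, since $\delta^e$ pairs $f$ with honest monomials immediately, with no auxiliary polynomial basis to introduce.
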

\begin{proof}
Denote the derivative operator with respect to $X_i$ by $\partial_i$ and consider the operator $\delta_i = X_i\partial_i$. For any $e=(e_1,\dots,e_n)\in\N^n$, write $\partial^e=\partial_1^{e_1}\cdots\partial_n^{e_n}$ and $\delta^e=\delta_1^{e_1}\cdots\delta_n^{e_n}$. The order of vanishing of $\pi_f$ at $(1,\dots,1)$ is the largest number $d\geq0$ such that $\partial^e\pi_f(1,\dots,1)=0$ for all $e\in\N^n$ with $|e|<d$. On the other hand, $|\sm(f)|$ is the largest number $d\geq0$ such that $\langle f,X^e\rangle=0$ for all $e\in\N^n$ with $|e|<d$. For any $e\in\N^n$ and $a\in\Z^n$, we have
\[
\delta^e X^a = (\delta_1^{e_1}X_1^{a_1}) \cdots (\delta_n^{e_n}X_n^{a_n}) = (a_1^{e_1}X_1^{a_1}) \cdots (a_n^{e_n}X_n^{e_n}) = a^e X^a.
\]
Hence,
\[
\delta^e\pi_f = \sum_{a\in\Z^n} f(a) \delta^e X^a = \sum_{a\in\Z^n} f(a) a^e X^a,
\]
so $\delta^e\pi_f(1,\dots,1) = \sum_{a\in\Z^n} f(a) a^e = \langle f, X^e\rangle$. Thus, $|\sm(f)|$ is the largest number $d\geq0$ such that $\delta^e\pi_f(1,\dots,1)=0$ for all $e\in\N^n$ with $|e|<d$. By induction, any $\delta^e$ can be written as
\[
\delta^e = X^e\partial^e + \sum_{e\neq e'\preceq e} c_{e,e'} X^{e'} \partial^{e'}
\]
with constants $c_{e,e'}$. Hence, we have $\partial^e\pi_f(1,\dots,1)=0$ for all $e\in\N^n$ with $|e|<d$ if and only if $\delta^e\pi_f(1,\dots,1)=0$ for all $e\in\N^n$ with $|e|<d$.
\end{proof}

In particular, for any bounded convex set $A\subseteq\R^n$, the maximal order of vanishing at $(1,\dots,1)$ of a nonzero Laurent polynomial whose Newton polytope is contained in $A$ is $\max\{|e|\mid e\in E_{A\cap\Z^n}\}$.

Let $a,b,c\geq1$ be relatively prime integers. The \emph{Seshadri constant} of the weighted projective plane $\vP^{a,b,c}$ at the generic point $x=[1:1:1]\in\vP^{a,b,c}$ is
\[
\inf\bigg\{\frac{\deg(C)}{\mult_x(C)} \ \bigg|\ C\subset\vP^{a,b,c}\textnormal{ curve through $x$}\bigg\}.
\]
See \cite{bauer-primer} for an introduction to Seshadri constants.
A curve $C$ of degree $d$ is given by a polynomial equation $f(A,B,C)=0$, where each monomial $A^iB^jC^k$ in $f$ has weighted degree $ai+bj+ck = d$. The multiplicity $\mult_x(C)$ of $C$ at the point $x=[1:1:1]$ is the largest number $m\geq0$ such that $\partial_A^i\partial_B^j f(1,1,1) = 0$ for all $i,j\geq0$ with $i+j<m$.

Let $P'\subset\R^3$ be the triangle spanned by $(1/a,0,0)$, $(0,1/b,0)$, $(0,0,1/c)$. Let
\[
H = \{(x,y,z)\in\R^3 \mid ax+by+cz=1\}
\]
be the hyperplane containing these three points. For any integer $d\geq0$, the points $(i,j,k)$ in $d\cdot P'\cap\Z^3$ correspond to the monomials $A^iB^jC^k$ of degree $ai+bj+ck=d$. Let $\rho:\R^3\ra\R^2$ be a linear map sending the (translated) lattice $H\cap\Z^3$ of rank two to the lattice $\Z^2$. Let $P=\rho(P')$. For any integer $d\geq0$, this map $\rho$ induces a bijection between $d\cdot P'\cap\Z^3$ and $d\cdot P\cap\Z^2$.

It follows that curves $C$ of degree $d$ with $\mult_x(C)=m$ are in bijection with functions $f\in F_{d\cdot P\cap\Z^2}$ (up to multiplication by a scalar) such that $|\sm(f)| = m$.

In particular, the Seshadri constant of $\vP^{a,b,c}$ is $w_P^{-1}$.

\section{Relatively prime functions}\label{relatively_prime_section}

In this section, let $n=2$. Although the results can easily be extended to any non-lexicographic monomial order, we assume for simplicity that $\leq$ is the degree-lexicographic monomial order with $X_1<X_2$. For any $e\in\N^2$, the number of monomials less than $X^e$, which we denote by $q(e)$ is
\[
q(e) = \binom{|e|+1}{2} + e_2.
\]
For any compact convex sets $P,Q\subset\R^2$, the \emph{mixed volume} $\vol(P,Q)$ is defined by
\[
\vol(P+Q)=\vol(P)+\vol(Q)+2\vol(P,Q).
\]
In this section, we prove the following theorem. We remind the reader that $F_{\Z^2}$ is isomorphic to a ring of Laurent polynomials, an in particular a unique factorization domain.

\begin{theorem}\label{relatively_prime_inequality}
If $f_1,f_2$ are relatively prime elements of $F_{\Z^2}$, then
\[
|\sm(f_1)||\sm(f_2)| \leq 2\vol(\NP(f_1),\NP(f_2)).
\]
\end{theorem}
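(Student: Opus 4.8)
The plan is to interpret both sides of the inequality geometrically over $\mathbb{C}$ and then invoke the Bernstein--Kushnirenko theorem. By the lemma proved in \cref{laurent_section}, $|\sm(f_i)|$ is the order of vanishing of the Laurent polynomial $\pi_{f_i}$ at the point $x=(1,1)$, which is precisely the multiplicity $\mult_x(\pi_{f_i})$ of the curve $\{\pi_{f_i}=0\}$ at $x$ in the torus $(\mathbb{C}^*)^2$. If either multiplicity is $0$, then the left-hand side vanishes and the inequality holds trivially, since mixed volumes of convex bodies are nonnegative; so I may assume both curves pass through $x$. Before continuing I would record that $f_1,f_2$ remain relatively prime in $\mathbb{C}[X_1^{\pm1},X_2^{\pm1}]$: if $g$ were a common factor over $\mathbb{C}$, then the complex conjugate $\sigma(g)$ would be one as well, hence an associate $\sigma(g)=cg$ with $|c|=1$ (the exponent shift is forced to be zero because $\sigma$ preserves Newton polytopes), and then $\lambda g$ with $\lambda^2=c^{-1}$ would be a common factor lying in $F_{\mathbb{Z}^2}=\mathbb{R}[X_1^{\pm1},X_2^{\pm1}]$, a contradiction.

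With this in hand I would use two standard facts from the intersection theory of plane curves. First, at a smooth point $x$ of the surface $(\mathbb{C}^*)^2$ the local intersection number of two curves is at least the product of their multiplicities, so that $I_x(\pi_{f_1},\pi_{f_2})\geq\mult_x(\pi_{f_1})\mult_x(\pi_{f_2})=|\sm(f_1)||\sm(f_2)|$ (here $I_x$ denotes $\dim_{\mathbb{C}}\mathcal{O}_x/(\pi_{f_1},\pi_{f_2})$). Second, because $f_1$ and $f_2$ are coprime over $\mathbb{C}$ their zero loci share no component, so $\{\pi_{f_1}=0\}\cap\{\pi_{f_2}=0\}$ is finite and every common solution in the torus is isolated. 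The Bernstein--Kushnirenko theorem then bounds the total number of torus solutions counted with multiplicity by the mixed volume in the BKK normalization, giving $\sum_{p\in(\mathbb{C}^*)^2}I_p(\pi_{f_1},\pi_{f_2})\leq \mathrm{BKK}(\NP(f_1),\NP(f_2))$. I would verify that this BKK number equals $2\vol(\NP(f_1),\NP(f_2))$ in the paper's normalization: the BKK bound for two generic polynomials supported on the unit square is $2$, matching $\vol(\mathrm{square}+\mathrm{square})-2\vol(\mathrm{square})=2$, so the BKK number coincides with $\vol(\NP(f_1)+\NP(f_2))-\vol(\NP(f_1))-\vol(\NP(f_2))=2\vol(\NP(f_1),\NP(f_2))$. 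Chaining these, $|\sm(f_1)||\sm(f_2)|\leq I_x\leq \sum_p I_p\leq 2\vol(\NP(f_1),\NP(f_2))$.

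I expect the main obstacle to be bookkeeping at the boundaries of the clean statement rather than the central idea. Concretely, I must (i) pin down the normalization so that the BKK count is exactly $2\vol(\NP(f_1),\NP(f_2))$ and not a different multiple, which the unit-square check above is meant to settle; (ii) make sure the multiplicity-versus-intersection-number inequality and the BKK count are both taken with respect to the \emph{same} notion of multiplicity (the local ring length), so that the two inequalities compose; and (iii) dispose of degenerate Newton polytopes. The last point is mild: if some $\NP(f_i)$ is a single point then $f_i$ is a unit, $\pi_{f_i}(x)\neq0$, and $|\sm(f_i)|=0$, so the bound is trivial, while the case of a lower-dimensional segment still yields a finite intersection and a positive mixed volume, so the argument goes through unchanged.
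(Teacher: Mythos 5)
Your argument is correct, and it is essentially the argument the paper itself sketches as motivation immediately after stating \Cref{relatively_prime_inequality} --- but it is not the proof the paper actually carries out. The paper's written proof is the self-contained one adapted from Fulton's proof of B\'ezout's theorem: the quantity $2\vol(\NP(f_1),\NP(f_2))$ is realized as the dimension of the cokernel of the map $(a_1,a_2)\mapsto a_1\ast f_1+a_2\ast f_2$ on spaces of finitely supported functions, computed purely combinatorially from Pick's theorem (\Cref{minkowski_count} and \Cref{parallelogram_law}), and the lower bound $|\sm(f_1)||\sm(f_2)|$ then comes from passing to the quotient spaces $H_{P,e}$ and counting monomials, $q(e_1+e_2)-q(e_1)-q(e_2)=|e_1||e_2|$. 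That route avoids everything your route must import: no complexification, no local intersection theory, and no Bernstein--Kushnirenko theorem --- which you need in its sharper, non-generic form (for a system with finitely many torus zeros, the sum of local intersection multiplicities is at most the mixed volume), so you should cite that multiplicity version explicitly, since some statements of Bernstein's theorem only assert the generic count. As the paper notes, the self-contained proof also generalizes to other monomial orders, because its only input about the order is the count $q(e)$ of monomials below $X^e$. What your route buys is brevity and geometric transparency, and your three ``bookkeeping'' worries (normalization, matching notions of multiplicity, degenerate polytopes) are exactly the right ones; all are handled correctly in substance.

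Two small repairs to your complexification step. First, from ``$\sigma(g)$ is also a common factor'' it does not follow that $\sigma(g)$ is an associate of $g$: either take $g$ to be a gcd of $f_1,f_2$ in $\mathbb{C}[X_1^{\pm1},X_2^{\pm1}]$ (gcds are unique up to units, and $\sigma$ fixes $f_1$ and $f_2$), or note that if the irreducible factors $g$ and $\sigma(g)$ were not associates, then $g\,\sigma(g)$ would be a $\sigma$-invariant, hence real, nonunit common factor --- a contradiction. Second, the scalar should satisfy $\lambda^2=c$ rather than $\lambda^2=c^{-1}$: with $\sigma(g)=cg$, $|c|=1$, and $|\lambda|=1$, one gets $\sigma(\lambda g)=\bar\lambda c g=\lambda^{-1}\lambda^{2}g=\lambda g$, so $\lambda g$ is real. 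Neither point affects the structure of your proof.
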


The idea behind this inequality is simple: The Bernstein--Kushnirenko theorem (see \cite{bernstein-bkk} or \cite[section 7.5]{using-algebraic-geometry}) implies that the intersection number of the Laurent polynomials $\pi_{f_1}$ and $\pi_{f_2}$ is $2\vol(\NP(f_1),\NP(f_2))$. On the other hand, the Laurent polynomials already vanish to order $|\sm(f_1)|$ and $|\sm(f_2)|$, respectively, at the point $(1,1)$, so their intersection number is at least $|\sm(f_1)||\sm(f_2)|$. (The same idea is also used for Seshadri constants for example in Lemma~5.2 of \cite{bauer-seshadri-constants}.)

Monotonicity of mixed volumes then implies:

\begin{corollary}\label{relatively_prime_inequality_same}
If $f_1,f_2$ are relatively prime elements of $F_{\Z^2}$ whose supports are contained in the convex set $P$, then
\[
|\sm(f_1)||\sm(f_2)| \leq 2\vol(P).
\]
\end{corollary}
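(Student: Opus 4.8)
The plan is to deduce this immediately from \Cref{relatively_prime_inequality} by bounding the mixed volume $\vol(\NP(f_1),\NP(f_2))$ from above by $\vol(P)$, so that the inequality $|\sm(f_1)||\sm(f_2)|\leq 2\vol(\NP(f_1),\NP(f_2))$ collapses to the cleaner statement involving only the single convex set $P$. All of the real work has already been done in the theorem; what remains is purely the monotonicity behavior of mixed volumes.

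First I would observe that since the supports $\supp(f_i)$ are finite and contained in the convex set $P$, each Newton polytope $\NP(f_i)=\hull(\supp(f_i))$ is a compact polytope contained in $P$. To stay entirely within compact convex bodies (and avoid assuming $P$ itself is compact), I would introduce $Q=\hull(\supp(f_1)\cup\supp(f_2))$, a compact convex polytope with $\NP(f_1),\NP(f_2)\subseteq Q\subseteq P$. Next I would invoke the monotonicity of mixed volumes: for compact convex sets $A\subseteq A'$ and $B\subseteq B'$ one has $\vol(A,B)\leq\vol(A',B')$. Applying this with $A=\NP(f_1)$, $B=\NP(f_2)$, and $A'=B'=Q$ gives $\vol(\NP(f_1),\NP(f_2))\leq\vol(Q,Q)$. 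It then remains to compute $\vol(Q,Q)$: from the defining relation $\vol(Q+Q)=\vol(Q)+\vol(Q)+2\vol(Q,Q)$ together with $\vol(2\cdot Q)=4\vol(Q)$ in dimension two, one reads off $\vol(Q,Q)=\vol(Q)$. Since $Q\subseteq P$ we also have $\vol(Q)\leq\vol(P)$, and chaining everything with \Cref{relatively_prime_inequality} yields
\[
|\sm(f_1)||\sm(f_2)| \leq 2\vol(\NP(f_1),\NP(f_2)) \leq 2\vol(Q,Q) = 2\vol(Q) \leq 2\vol(P),
\]
which is the claim.

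The proof has no substantive obstacle: the only nontrivial input is the standard monotonicity property of mixed volumes, and that property carries all the geometric content. The remaining ingredients are the elementary identity $\vol(Q,Q)=\vol(Q)$ and the observation that convexity of $P$ forces each Newton polytope into $P$. The one subtlety I would be careful about is that the mixed volume is naturally defined for compact convex bodies, so I pass to the compact polytope $Q$ rather than working with $P$ directly; this costs nothing because the supports are finite, and monotonicity of ordinary volume recovers the bound $\vol(Q)\leq\vol(P)$ at the end.
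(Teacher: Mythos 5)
Your proof is correct and follows essentially the same route as the paper, which deduces the corollary in one line from \Cref{relatively_prime_inequality} via monotonicity of mixed volumes (here $\vol(\NP(f_1),\NP(f_2))\leq\vol(P,P)=\vol(P)$). Your extra care in passing to the compact polytope $Q=\hull(\supp(f_1)\cup\supp(f_2))$ and computing $\vol(Q,Q)=\vol(Q)$ is a sound, slightly more explicit version of the same argument.
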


We call an irreducible element $f$ of $F_{\Z^2}$ \emph{large} if $|\sm(f)| > \sqrt{2\vol(\NP(f))}$. According to \Cref{relatively_prime_inequality}, there cannot be more than one large element~$f$ (up to units) with the same Newton polytope.

On the other hand, if $f_1$ and $f_2$ are non-large irreducible elements of $F_{\Z^2}$, then the inequality in \Cref{relatively_prime_inequality} immediately follows from the Brunn--Minkowski inequality $\vol(P_1,P_2)\geq\sqrt{\vol(P_1)\vol(P_2)}$.

We now give another more self-contained proof of \Cref{relatively_prime_inequality}, both for the sake of completeness and because it can more easily be generalized to other monomial orders. We adapt here the proof of Bézout's theorem in section~5.2 of \cite{fulton} and of the fact that the intersection multiplicity of two curves at a point is at least the product of the multiplicities of those curves at the point in section~3.3 of \cite{fulton}.

For any $e\in\N^2$ and any compact convex set $P$, let $W_{P,e}\subseteq F_{P\cap\Z^2}$ be the vector space of functions $f\in F_{P\cap\Z^2}$ such that $\langle f,X^{e'}\rangle=0$ for all monomials $X^{e'}<X^e$. By (\ref{suppast}) and (\ref{smast}), we have
\[
W_{P_1,e_1}\ast W_{P_2,e_2} \subseteq W_{P_1+P_2,e_1+e_2}.
\]
Let $H_{P,e} = F_{P\cap\Z^2}/W_{P,e}$. Now, $H_{P,e}$ is dual to the vector space of functions on $P\cap\Z^2$ that can be interpolated by a polynomial $p$ with $p=0$ or $\lm(p)<X^e$. The dimension of this space is
\[
\dim(H_{P,e}) = |\{e'\in E_{P\cap\Z^2} \mid X^{e'}<X^e\}|.
\]
Therefore, for any $e\in\N^2$, if $P$ contains a sufficiently large axis-parallel square, \Cref{combinatorial_nullstellensatz} implies that
\[
\dim(H_{P,e}) = q(e).
\]

We will use the following consequence of Pick's theorem:

\begin{lemma}\label{minkowski_count}
Let $P_1,P_2$ be compact convex lattice polygons (polygons whose corners lie in $\Z^2$). Then,
\[
|(P_1+P_2)\cap\Z^2| = |P_1\cap\Z^2| + |P_2\cap\Z^2| + 2\vol(P_1,P_2) - 1.
\]
\end{lemma}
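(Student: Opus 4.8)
The plan is to prove this via Pick's theorem, which states that for a lattice polygon $Q$ with area $A$, the number of interior lattice points $I$, and the number of boundary lattice points $B$, we have $A = I + B/2 - 1$, or equivalently $|Q\cap\Z^2| = I + B = A + B/2 + 1$. So first I would express each of the three quantities $|P_1\cap\Z^2|$, $|P_2\cap\Z^2|$, and $|(P_1+P_2)\cap\Z^2|$ in terms of areas and boundary lattice point counts using Pick's theorem. Writing $A_i = \vol(P_i)$ and $b_i = |\partial P_i\cap\Z^2|$ for the boundary lattice point counts, this reduces the claimed identity to a relation among the areas and the boundary counts.

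Once everything is written out, the area terms should combine correctly: since $\vol(P_1+P_2) = \vol(P_1)+\vol(P_2)+2\vol(P_1,P_2)$ by the definition of mixed volume, the area contributions on the two sides match up precisely with the $2\vol(P_1,P_2)$ term in the statement. What remains is to verify the purely combinatorial identity relating the boundary lattice point counts, namely that $b_{P_1+P_2} = b_{P_1} + b_{P_2}$, where $b_Q$ denotes the number of lattice points on the boundary of $Q$. I would establish this by analyzing the edges of the Minkowski sum.

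The key structural fact is that the edges of $P_1+P_2$ are obtained by sorting the edges of $P_1$ and $P_2$ by their outward normal directions (slopes) and concatenating them; edges with the same direction get merged by adding their edge vectors. For a single lattice segment that is the edge vector $(\alpha,\beta)$, the number of lattice points on it beyond its starting vertex is $\gcd(|\alpha|,|\beta|)$, which is additive when two parallel edge vectors in the same direction are summed (since $\gcd(\alpha_1,\beta_1) + \gcd(\alpha_2,\beta_2) = \gcd(\alpha_1+\alpha_2,\beta_1+\beta_2)$ when the two vectors are positive scalar multiples of a common primitive vector). Summing the lattice-point contributions over all edge directions, counting each vertex once as the start of its outgoing edge, I expect the total boundary count of $P_1+P_2$ to equal the sum of the boundary counts of $P_1$ and $P_2$. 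I would phrase this cleanly by noting that the boundary lattice points of a lattice polygon $Q$, counted by the primitive edge-direction decomposition, depend only on the multiset of primitive edge vectors with multiplicity, and Minkowski addition merely unions these multisets.

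The main obstacle I anticipate is making the edge-merging argument fully rigorous, in particular handling the bookkeeping of vertices versus interior edge points without double-counting, and confirming that parallel edges of $P_1$ and $P_2$ (same normal direction) genuinely merge into a single edge of the sum with additively combined lattice-point counts. This is where a careful statement about primitive vectors and $\gcd$-additivity is needed. Once $b_{P_1+P_2} = b_{P_1}+b_{P_2}$ is secured, substituting the three Pick's theorem expansions and cancelling the boundary terms yields the claimed formula, with the surviving $+1$ on each side combining to leave the single $-1$ in the statement.
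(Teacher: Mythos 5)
Your proposal is correct and follows essentially the same route as the paper: apply Pick's theorem to $P_1$, $P_2$, and $P_1+P_2$, reduce the identity to the additivity $B(P_1+P_2)=B(P_1)+B(P_2)$ of boundary lattice-point counts (proved via the fact that the edges of the Minkowski sum are translates of the edges of the summands, with parallel edges merging gcd-additively), and absorb the area terms using the definition of mixed volume. The one detail worth adding is a convention for degenerate polygons — the paper counts interior points of a segment twice as boundary points and sets $B(P)=0$ for a point, so that Pick's formula remains valid — since the lemma is later applied to Newton polytopes, which may well be segments or single points.
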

\begin{proof}
Pick's theorem states that any compact lattice polygon $P$ satisfies
\[
|P\cap\Z^2| = \vol(P) + 1 + \frac{B(P)}{2},
\]
where $B(P)$ is the number of lattice points on the edges of $P$. (If $P$ is a line segment, we count points in the interior of this line segment twice, thinking of $P$ as having two edges with opposite orientation. If $P$ is a single point, $B(P)=0$.) The edges of the Minkowski sum $P_1+P_2$ are (translates of) the edges of $P_1$ and $P_2$ (joining edges with the same orientation). Hence, $B(P_1+P_2)=B(P_1)+B(P_2)$. Then,
\begin{align*}
&|(P_1+P_2)\cap\Z^2| \\
&= \vol(P_1+P_2) + 1 + \frac{B(P_1+P_2)}2 \\
&= \vol(P_1)+\vol(P_2)+2\vol(P_1,P_2) + 1 + \frac{B(P_1)+B(P_2)}2 \\
&= |P_1\cap\Z^2| + |P_2\cap\Z^2| + 2\vol(P_1,P_2) - 1.
\qedhere
\end{align*}
\end{proof}

\begin{corollary}\label{parallelogram_law}
Let $P_1,P_2,U$ be compact convex lattice polygons. Then,
\[
|(U+P_1+P_2)\cap\Z^2| - |(U+P_1)\cap\Z^2| - |(U+P_2)\cap\Z^2| + |U\cap\Z^2|
= 2\vol(P_1,P_2).
\]
\end{corollary}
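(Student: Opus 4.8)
The plan is to treat \Cref{parallelogram_law} as a purely formal inclusion–exclusion identity and reduce it to \Cref{minkowski_count} together with the additivity of mixed volume. Write $N(Q)=|Q\cap\Z^2|$ for brevity, and note first that any Minkowski sum of compact convex lattice polygons is again a compact convex lattice polygon (its vertices are sums of vertices of the summands), so $U+P_1$ is a lattice polygon and \Cref{minkowski_count} applies to each pair of bodies I will form. The idea is to apply the lemma to the triple sum by grouping it as $(U+P_1)+P_2$, and to apply it once more to $U+P_2$, so that all the ``large'' counts are expressed through $N(U+P_1)$, $N(U)$, and mixed volumes.

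Concretely, I would first invoke \Cref{minkowski_count} with the two bodies $U+P_1$ and $P_2$ to write $N(U+P_1+P_2)=N(U+P_1)+N(P_2)+2\vol(U+P_1,P_2)-1$, and separately expand $N(U+P_2)=N(U)+N(P_2)+2\vol(U,P_2)-1$. Substituting both expansions into the left-hand side of the claimed identity and cancelling the terms $N(U+P_1)$, $N(P_2)$, $N(U)$, and the constants $-1$, everything collapses to
\begin{align*}
&N(U+P_1+P_2) - N(U+P_1) - N(U+P_2) + N(U) \\
&= 2\bigl(\vol(U+P_1,P_2) - \vol(U,P_2)\bigr).
\end{align*}
This step is entirely routine bookkeeping once the lemma is in hand; no further lattice-point information is needed.

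It then remains to establish the additivity of mixed volume in the first argument, namely $\vol(U+P_1,P_2)=\vol(U,P_2)+\vol(P_1,P_2)$, which immediately finishes the proof. I expect this to be the one genuinely non-combinatorial input and hence the main point to justify carefully, since the definition recorded in the paper only relates two bodies. The cleanest route is to expand $\vol(\lambda_1 U+\lambda_2 P_1+\lambda_3 P_2)$ and observe, via the defining relation $\vol(P+Q)=\vol(P)+\vol(Q)+2\vol(P,Q)$ applied repeatedly, that the area of a Minkowski combination of planar convex bodies is a homogeneous quadratic form in the coefficients $\lambda_i$; comparing the coefficient of the relevant cross term yields the bilinearity. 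Alternatively one may simply cite the standard additivity of mixed volumes. Either way, combining this with the displayed reduction gives exactly $2\vol(P_1,P_2)$, as required.
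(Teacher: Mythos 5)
Your proof is correct, and it rests on the same two pillars as the paper's own argument: two applications of \Cref{minkowski_count} followed by an appeal to the (bi)linearity of mixed volume. The difference is in the grouping. You pair $(U+P_1)$ with $P_2$ and $U$ with $P_2$, so that the $|P_2\cap\Z^2|$ terms cancel and the identity reduces to $2\vol(U+P_1,P_2)-2\vol(U,P_2)$, after which you only need additivity of the mixed volume in its first slot, $\vol(U+P_1,P_2)=\vol(U,P_2)+\vol(P_1,P_2)$. The paper instead pairs $(U+P_1+P_2)$ with $U$ and $(U+P_1)$ with $(U+P_2)$; these two applications produce the \emph{same} big count $|(2U+P_1+P_2)\cap\Z^2|$, which cancels upon subtraction, leaving $2\vol(U+P_1,U+P_2)-2\vol(U+P_1+P_2,U)$, and full bilinear expansion in both arguments collapses this to $2\vol(P_1,P_2)$. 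Your version is marginally more economical — it invokes only additivity in one argument rather than a two-sided expansion, and it keeps the counts smaller — at the cost of breaking the symmetry between $P_1$ and $P_2$; the paper's version is symmetric and makes the cancellation of lattice-point counts completely automatic. Your closing remark that additivity can be extracted from the quadratic dependence of $\vol(\lambda_1U+\lambda_2P_1+\lambda_3P_2)$ on the $\lambda_i$ (or simply cited) is on the same footing as the paper's bare assertion that ``mixed volumes are bilinear,'' so no gap there.
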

\begin{proof}
We have
\begin{align*}
& |(U+P_1+P_2)\cap\Z^2| + |U\cap\Z^2| \\
&= |(U+P_1+P_2+U)\cap\Z^2| - 2\vol(U+P_1+P_2,U) + 1
\end{align*}
and
\begin{align*}
& |(U+P_1)\cap\Z^2| + |(U+P_2)\cap\Z^2| \\
&= |(U+P_1+U+P_2)\cap\Z^2| - 2\vol(U+P_1,U+P_2) + 1.
\end{align*}
Moreover,
\[
\vol(U+P_1,U+P_2) - \vol(U+P_1+P_2,U) = \vol(P_1,P_2)
\]
because mixed volumes are bilinear.
\end{proof}

\begin{proof}[Proof of \Cref{relatively_prime_inequality}]
Write $P_i=\NP(f_i)$ and $e_i=(e_{i1},e_{i2})=\sm(f_i)\in\N^2$.

Let $U$ be a compact convex lattice polygon. Consider the following sequence of linear maps:
\[
0\ra F_{U\cap\Z^2} \stackrel{\alpha_U}\ra F_{(U+P_2)\cap\Z^2} \times F_{(U+P_1)\cap\Z^2} \stackrel{\beta_U}\ra F_{(U+P_1+P_2)\cap\Z^2},
\]
where the first map sends $b$ to $(b\ast f_2,-b\ast f_1)$ and the second map sends $(a_1,a_2)$ to $a_1\ast f_1+a_2\ast f_2$. Because $f_1$ and $f_2$ are relatively prime, the kernel of $\beta_U$ consists exactly of the pairs of the form $(b\ast f_2,-b\ast f_1)$ with $b\in F_{\Z^2}$. Since $\NP(b\ast f_2)=\NP(b)+\NP(f_2)$, we have $b\ast f_2\in F_{(U+P_2)\cap\Z^2}$ if and only if $b\in F_{U\cap\Z^2}$. Hence, the above sequence is exact, so
\begin{align*}
&\dim(\coker(\beta_U)) \\
&= \dim(F_{(U+P_1+P_2)\cap\Z^2}) - \dim(F_{(U+P_2)\cap\Z^2}) - \dim(F_{(U+P_1)\cap\Z^2}) + \dim(F_{U\cap\Z^2}) \\
&= |(U+P_1+P_2)\cap\Z^2| - |(U+P_2)\cap\Z^2| - |(U+P_1)\cap\Z^2| + |U\cap\Z^2| \\
&= 2\vol(P_1,P_2).
\end{align*}
By definition, $f_i\in W_{P_i,e_i}$. We therefore have a well-defined linear map
\[
\gamma_U: H_{U+P_2,e_2} \times H_{U+P_1,e_1} \ra H_{U+P_1+P_2,e_1+e_2}
\]
sending $(a_1,a_2)$ to $a_1\ast f_1+a_2\ast f_2$. There is a natural (surjective) quotient map $\coker(\beta_U)\ra\coker(\gamma_U)$. Hence,
\begin{align*}
2\vol(P_1,P_2)
&= \dim(\coker(\beta_U)) \\
&\geq \dim(\coker(\gamma_U)) \\
&\geq \dim(H_{U+P_1+P_2,e_1+e_2}) - \dim(H_{U+P_2,e_2}) - \dim(H_{U+P_1,e_1}).
\end{align*}
If $U$ contains a sufficiently large square, the right-hand side is
\begin{align*}
& q(e_1+e_2) - q(e_1) - q(e_2) \\
&= \binom{|e_1|+|e_2|+1}{2} - \binom{|e_1|+1}{2} - \binom{|e_2|+1}{2} \\
&= |e_1||e_2|.
\qedhere
\end{align*}
\end{proof}

\section{Lattice points in triangles}\label{triangle_section}

In this section, we again let $n=2$ and assume for simplicity that $\leq$ is the degree-lexicographic monomial order with $X_1<X_2$. We will study the set $S_P$ when $P$ is a triangle.

For a compact convex set $P\subseteq\R^n$ of positive volume and a nonempty compact set $A\subseteq\R^n$, we let $l_{P,A}=l_{P,\hull(A)}$ be the smallest number $l\geq0$ such that $A\subseteq l\cdot P+v$ for some $v\in\R^n$.

\begin{lemma}\label{largest_triangle}
Let $P\subset\R^2$ be a compact triangle and let $A,B\subset\R^2$ be nonempty finite sets. Then, $l_{P,A+B}=l_{P,A}+l_{P,B}$.
\end{lemma}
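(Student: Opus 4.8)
The plan is to translate everything into support functions and then exploit the fact that a triangle has exactly three edge directions. Since $l_{P,A}$ depends only on $\hull(A)$ and since $\hull(A+B)=\hull(A)+\hull(B)$, I would first replace $A$ and $B$ by their (convex, polygonal) hulls $K_A=\hull(A)$ and $K_B=\hull(B)$ and prove the equivalent statement $l_{P,K_A+K_B}=l_{P,K_A}+l_{P,K_B}$. For a compact convex set $K$ and a unit vector $u$, I write $h_K(u)=\max_{x\in K}\langle u,x\rangle$ for the support function, so that $h_{lK+v}(u)=l\cdot h_K(u)+\langle u,v\rangle$ for $l\geq0$ and, crucially, $h_{K_1+K_2}=h_{K_1}+h_{K_2}$.

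Let $u_1,u_2,u_3$ be the outer unit normals of the three edges of $P$. These are pairwise non-parallel and, weighted by the edge lengths $c_1,c_2,c_3>0$, they close up: $c_1u_1+c_2u_2+c_3u_3=0$. With this choice $\sum_i c_i h_P(u_i)=2\vol(P)>0$ (placing the origin inside $P$ and summing the areas of the triangles spanned by the origin and the edges; the sum is translation-invariant because $\sum_i c_iu_i=0$). Since $lP+v$ is exactly the intersection of the three half-planes $\{x:\langle u_i,x\rangle\leq l\cdot h_P(u_i)+\langle u_i,v\rangle\}$, the containment $K\subseteq lP+v$ is equivalent to the three scalar inequalities $h_K(u_i)\leq l\cdot h_P(u_i)+\langle u_i,v\rangle$. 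The first main step is then to derive the closed formula
\[
l_{P,K}=\frac{\sum_i c_i\,h_K(u_i)}{\sum_i c_i\,h_P(u_i)}.
\]

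I would prove this formula in two directions. For the lower bound, I multiply the inequalities $\langle u_i,v\rangle\geq h_K(u_i)-l\cdot h_P(u_i)$ by $c_i$ and sum; the left side vanishes because $\sum_i c_iu_i=0$, forcing $l\geq\bigl(\sum_i c_i h_K(u_i)\bigr)\big/\bigl(\sum_i c_i h_P(u_i)\bigr)$, so $l_{P,K}$ is at least this ratio. For the matching upper bound I set $l^*$ equal to the ratio, so that $\sum_i c_ib_i=0$ where $b_i:=h_K(u_i)-l^*\cdot h_P(u_i)$, and I solve the two independent equations $\langle u_1,v\rangle=b_1$ and $\langle u_2,v\rangle=b_2$ for the unique point $v^*$; using $u_3=-(c_1u_1+c_2u_2)/c_3$ together with $\sum_i c_ib_i=0$ yields $\langle u_3,v^*\rangle=b_3$, so $v^*$ realizes $K\subseteq l^*P+v^*$. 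This establishes the formula, after which the lemma is immediate: additivity $h_{K_A+K_B}=h_{K_A}+h_{K_B}$ makes the numerator additive, so $l_{P,A+B}=l_{P,A}+l_{P,B}$.

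The crux, and the only place where $P$ being a \emph{triangle} rather than a general convex polygon is used, is precisely this sufficiency step. It is the existence of a single positive dependency among the three edge normals that collapses the feasibility of the containment into one linear condition on the support data and thereby makes $l_{P,K}$ an honest linear functional of $h_K$. For a polygon with more edges the analogous quantity becomes a maximum of several such ratios, which is only subadditive, so the identity would degrade to the inequality $l_{P,A+B}\leq l_{P,A}+l_{P,B}$ (that easy direction, valid for any convex $P$, also follows directly from $l_1P+l_2P=(l_1+l_2)P$). I expect the intersection-point computation showing $v^*$ automatically satisfies the third constraint to be the one genuinely load-bearing calculation; the rest is bookkeeping with support functions.
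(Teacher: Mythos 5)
Your proof is correct, and it reaches the same underlying fact as the paper --- that $l_{P,K}$ is a linear functional of the support data of $K$ --- by a genuinely different route. The paper applies an affine linear transformation to normalize $P$ to the standard triangle $\{(x,y): x,y\geq 0,\ x+y\leq 1\}$ and then simply records the formula $l_{P,A}=\max_{(x,y)\in A}(x+y)-\min_{(x,y)\in A}x-\min_{(x,y)\in A}y$, whose additivity under Minkowski sums is immediate; the whole proof is two sentences. Your argument stays coordinate-free: you characterize containment in $lP+v$ by three support inequalities, use the balancing relation $c_1u_1+c_2u_2+c_3u_3=0$ to force the lower bound $l\geq\bigl(\sum_i c_ih_K(u_i)\bigr)/\bigl(\sum_i c_ih_P(u_i)\bigr)$, and then show this value is attained by solving two of the equality constraints for $v^*$ and observing that the third holds automatically. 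After the paper's normalization, your three weighted support values $c_ih_K(u_i)$ become exactly $\max(x+y)$, $-\min x$, $-\min y$, so your formula specializes to theirs; the practical difference is that you prove the formula, whereas the paper's normalization makes it transparent enough to assert. Your route buys: no choice of coordinates; the identification $l_{P,K}=\vol(P,K)/\vol(P)$, which connects pleasantly to the mixed volumes in \Cref{relatively_prime_inequality}; and a precise diagnosis of where ``triangle'' enters (a single positive dependency among the edge normals), consistent with the paper's unit-square counterexample after the lemma. The paper's route buys brevity. One small loose end in yours: since $l_{P,K}$ is defined as the smallest $l\geq 0$, you should note that your $l^*$ is automatically nonnegative --- immediate, because for any $x\in K$ one has $\sum_i c_ih_K(u_i)\geq\sum_i c_i\langle u_i,x\rangle=0$, while the denominator is $2\vol(P)>0$.
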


This lemma fails for polygons with more than three vertices: For example, take $P=[0,1]\times[0,1]$ and $A=\{(0,0),(1,0)\}$ and $B=\{(0,0),(0,1)\}$. Then, $l_{P,A}=l_{P,B}=l_{P,A+B}=1$.

\begin{proof}
Applying an affine linear transformation, we can assume without loss of generality that $P$ is the standard triangle
\[
P = \{(x,y)\in\R^2 \mid x,y\geq0,\ x+y\leq 1\}.
\]
Then,
\[
l_{P,A} = \max_{(x,y)\in A}(x+y) - \min_{(x,y)\in A}x - \min_{(x,y)\in A}y
\]
for any finite set $A\subset\R^2$ and the claim follows immediately.
\end{proof}

For a nonzero function $f\in F_{\Z^2}$, we will also write $l_{P,f} = l_{P,\supp(f)} = l_{P,\NP(f)}$.

When computing Seshadri constants, it suffices to consider irreducible curves. More generally:

\begin{theorem}\label{triangle_spanned}
Let $P\subset\R^2$ be a compact triangle. Let
\[
Z_P = \{(0,0)\} \cup \bigg\{\frac{\sm(f)}{l_{P,f}} \ \bigg|\ f\in F_{\Z^2}\textnormal{ irreducible}\bigg\}.
\]
Then, $S_P = \overline{\hull(Z_P)}$.
\end{theorem}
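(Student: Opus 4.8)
The plan is to prove the two inclusions $\overline{\hull(Z_P)}\subseteq S_P$ and $S_P\subseteq\overline{\hull(Z_P)}$ separately. The structural inputs I would rely on are: the additivity of the gauge $l_{P,\cdot}$ for a triangle (\Cref{largest_triangle}); the multiplicativities $\sm(f\ast g)=\sm(f)+\sm(g)$ and $\NP(f\ast g)=\NP(f)+\NP(g)$ under convolution, from \eqref{smast} and \eqref{NPast}; and the fact that $F_{\Z^2}$ is a UFD, so every nonzero $g$ factors as a unit times a product of irreducibles. Throughout I would use that a unit has $\sm=0$ (it is a single Laurent monomial, so it does not vanish at $(1,1)$) and $l_{P,\cdot}=0$, so units drop out of all the relevant sums.

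For $\overline{\hull(Z_P)}\subseteq S_P$, since $S_P$ is compact and convex by \Cref{cont_basic}, it suffices to check the single points $Z_P\subseteq S_P$. The point $(0,0)$ is clearly in $S_P$. For an irreducible $f$, set $l=l_{P,f}$; this is positive because $f$ is not a unit, so $\NP(f)$ is not a point. Fix (by attainment of the infimum in the definition of $l_{P,f}$) a translate $v$ with $\NP(f)\subseteq l\cdot P+v$, so that $\supp(f)\subseteq(l\cdot P+v)\cap\Z^2$. Then $\sm(f)\in E_{\supp(f)}\subseteq E_{(l\cdot P+v)\cap\Z^2}$ by monotonicity of $E$. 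By the very definition of $S$ (its $k=0$ term) we have $E_{(l\cdot P+v)\cap\Z^2}\subseteq S_{l\cdot P+v}$, and by the scaling and translation invariance of $S$ in \Cref{cont_basic} this equals $l\cdot S_P$. Hence $\sm(f)\in l\cdot S_P$, that is, $\sm(f)/l_{P,f}\in S_P$.

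For the reverse inclusion, it is enough—by the definition of $S_P$ as the closure of $\bigcup_k 2^{-k}\cdot E_{2^k\cdot P\cap\Z^2}$—to show $2^{-k}\cdot E_{2^k\cdot P\cap\Z^2}\subseteq\hull(Z_P)$ for every $k$. Take $e\in E_{2^k\cdot P\cap\Z^2}$ and write $e=\sm(g)$ with $0\neq g\in F_{2^k\cdot P\cap\Z^2}$. Factoring $g=u\ast f_1\ast\cdots\ast f_r$ with $u$ a unit and each $f_i$ irreducible, multiplicativity of $\sm$ gives $\sm(g)=\sum_i\sm(f_i)$, while \Cref{largest_triangle} together with $\NP(g)=\NP(u)+\sum_i\NP(f_i)$ gives $l_{P,g}=\sum_i l_{P,f_i}$. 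Since $\supp(g)\subseteq 2^k\cdot P$ forces $l_{P,g}\leq 2^k$, setting $\lambda_i=l_{P,f_i}/2^k\geq 0$ gives $\sum_i\lambda_i\leq 1$ and
\[
2^{-k}\sm(g)=\sum_i\lambda_i\cdot\frac{\sm(f_i)}{l_{P,f_i}}=\sum_i\lambda_i z_i+\Bigl(1-\sum_i\lambda_i\Bigr)(0,0),
\]
a convex combination of points of $Z_P$ (here $z_i=\sm(f_i)/l_{P,f_i}\in Z_P$, defined because $l_{P,f_i}>0$, and the case $r=0$ just gives $(0,0)$). Thus $2^{-k}\sm(g)\in\hull(Z_P)$; taking the union over $k$ and the closure yields $S_P\subseteq\overline{\hull(Z_P)}$.

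The step I expect to need the most care is not a computation but the conceptual choice in the first inclusion: rather than trying to push the (possibly non-integer, non-lattice) translate $v$ back so the support literally sits inside a dilate of $P$, one should invoke the translation invariance $S_{Q+v}=S_Q$ from \Cref{cont_basic}, which collapses that inclusion to a single line. The other delicate point is the bookkeeping for units and the attainment of $l_{P,f}$. Finally, I would emphasize that the whole argument hinges on the additivity $l_{P,A+B}=l_{P,A}+l_{P,B}$, so this is exactly where $P$ being a \emph{triangle} (not a general polygon) is indispensable, as the counterexample following \Cref{largest_triangle} shows.
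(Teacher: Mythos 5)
Your proof is correct and takes essentially the same route as the paper's: the inclusion $Z_P\subseteq S_P$ via monotonicity of $E$ together with the translation and scaling properties of \Cref{cont_basic}, and the reverse inclusion via UFD factorization, additivity of $\sm$, and additivity of $l_{P,\cdot}$ from \Cref{largest_triangle}, expressing $2^{-k}\cdot\sm(g)$ as a convex combination of points of $Z_P$ and $(0,0)$. The only differences are presentational: you prove $2^{-k}\cdot E_{2^k\cdot P\cap\Z^2}\subseteq\hull(Z_P)$ directly and then take closures (and track the unit factor explicitly), where the paper phrases the same computation as a neighborhood argument around a point of $S_P$.
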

\begin{proof}
\begin{description}
\item[``$\supseteq$''] Since $S_P$ is convex and closed, it suffices to show that $S_P \supseteq Z_P$. Let $f\in F_{\Z^2}$ be irreducible. There is a vector $v\in\R^2$ such that $Q:=\NP(f)\subseteq l_{P,f}\cdot P+v$. We have
\[
\sm(f)\in E_{Q\cap\Z^2}\subseteq S_Q = S_{Q-v}
\]
and therefore
\[
\frac{\sm(f)}{l_{P,f}} \in S_{l_{P,f}^{-1}\cdot(Q-v)} \subseteq S_P.
\]
\item[``$\subseteq$''] Let $(0,0)\neq e\in S_P$. Let $U\subseteq\R^2$ be an open neighborhood of $e$ not containing $(0,0)$. By the definition of $S_P$, there is an integer $d\geq1$ (in fact a power of two) such that $U\cap d^{-1}\cdot E_{d\cdot P\cap\Z^2}\neq\emptyset$. Hence, there is a nonzero function $f\in F_{d\cdot P\cap\Z^2}$ such that $d^{-1}\cdot\sm(f)\in U$. Clearly, $l_{P,f}\leq d$. Also, $f$ cannot be a unit in $F_{\Z^2}$ because that would imply $\sm(f)=(0,0)$. Let $f=f_1\ast\cdots\ast f_k$ be a factorization into irreducible elements of $F_{\Z^2}$. We have $\sm(f) = \sum_i \sm(f_i)$ by (\ref{smast}) and $l_{P,f} = \sum_i l_{P,f_i}$ by (\ref{NPast}) and \Cref{largest_triangle}. Hence,
\[
U \ni d^{-1}\cdot\sm(f) = \sum_i \frac{l_{P,f_i}}{d}\cdot \frac{\sm(f_i)}{l_{P,f_i}} + \left(1-\sum_i\frac{l_{P,f_i}}{d}\right)\cdot(0,0),
\]
which lies in $\hull(Z_P)$. We have shown that every open neighborhood $U$ of $e$ intersects $\hull(Z_P)$, so indeed $e\in\overline{\hull(Z_P)}$.
\qedhere
\end{description}
\end{proof}

\begin{theorem}\label{main_triangle_thm}
Let $P\subset\R^2$ be a compact triangle.
\begin{enumerate}[label=\alph*)]
\item\label{triangle_case1}
Let $f$ be an irreducible element of $F_{\Z^2}$ and $e=(e_1,e_2) = l_{P,f}^{-1}\cdot\sm(f)$. If $w:=|e|\geq \sqrt{2\vol(P)}$, then
\[
S_P = \hull(\{(0,0), (v,0), (e_1,e_2), (0,v)\})
\]
with $v:=2\vol(P)/w$.
\item\label{triangle_case2}
If there is no such irreducible element $f$, then
\[
S_P = \hull(\{(0,0),(v,0),(0,v)\})
\]
with $v:=w:=\sqrt{2\vol(P)}$.
\end{enumerate}
\end{theorem}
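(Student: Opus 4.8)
The plan is to combine the two main tools already developed: the characterization $S_P = \overline{\hull(Z_P)}$ from \Cref{triangle_spanned} (which says $S_P$ is the closed convex hull of the origin together with the points $\sm(f)/l_{P,f}$ over irreducible $f$) and the inequality $|\sm(f_1)||\sm(f_2)| \leq 2\vol(\NP(f_1),\NP(f_2))$ from \Cref{relatively_prime_inequality}. The key observation is that for any two distinct irreducible elements (up to units) $f_1, f_2$, setting $e^{(i)} = \sm(f_i)/l_{P,f_i}$, monotonicity and scaling of mixed volume together with \Cref{largest_triangle} should translate the relatively-prime inequality into a bound of the form $|\sm(f_1)||\sm(f_2)|$-type constraint on the normalized points $e^{(1)}, e^{(2)} \in S_P$. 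Concretely, I expect to show that any such point $e = (e_1, e_2)$ lies in the region $\{|e| \leq \sqrt{2\vol(P)}\} \cup \{e_1 e_2 \leq \text{something}\}$, so that $S_P$ cannot be ``too large'' in a precise sense controlled by $2\vol(P)$.

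First I would fix the affine normalization from \Cref{largest_triangle}, taking $P$ to be the standard triangle so that $l_{P,A} = \max(x+y) - \min x - \min y$ and $|e| = e_1 + e_2$ becomes the natural degree coordinate. For the ``$\subseteq$'' direction, I would use \Cref{triangle_spanned} to reduce to bounding the generators $e^{(i)} = \sm(f_i)/l_{P,f_i}$: by \Cref{cont_basic}(c) we know $\vol(S_P) = \vol(P)$, so $S_P$ is a convex lower subset of prescribed volume whose extreme points (other than the coordinate axes' endpoints and origin) come from irreducible $f$. In case \ref{triangle_case1}, the hypothesis hands us one irreducible $f$ with a ``large'' normalized point $e$ with $|e| = w \geq \sqrt{2\vol(P)}$; I would show that $\hull(\{(0,0),(v,0),e,(0,v)\})$ with $v = 2\vol(P)/w$ has volume exactly $\vol(P)$ (a short computation: the quadrilateral splits into triangles), so by the ``$\supseteq$'' inclusion and the volume constraint it must equal $S_P$. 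The point is that once $S_P$ contains this quadrilateral and has the correct total volume, equality is forced.

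The main obstacle is the ``$\supseteq$'' containment in case \ref{triangle_case1}, i.e.\ showing $S_P$ actually contains the full quadrilateral, equivalently that the symmetric points $(v,0)$ and $(0,v)$ lie in $S_P$ and that $S_P$ does not bulge past the edges of the quadrilateral. The containment of $(v,0)$ and $(0,v)$ should follow from the symmetry available on the standard triangle (swapping coordinates is an affine symmetry of $P$, hence of $S_P$) combined with the already-established point $e$; the nontrivial part is the upper bound, i.e.\ that no irreducible $g$ produces a normalized point $e^{(g)}$ outside this quadrilateral. Here I would apply \Cref{relatively_prime_inequality} to the pair $(f, g)$: since $f$ and $g$ are distinct irreducibles they are relatively prime, so $|\sm(f)||\sm(g)| \leq 2\vol(\NP(f), \NP(g))$, and after normalizing by $l_{P,f} l_{P,g}$ and using that $\NP(f) \subseteq l_{P,f}\cdot P + v_f$ (so the mixed volume is controlled by $2\vol(P)$ via bilinearity and monotonicity), this becomes a linear inequality cutting out exactly the slanted edge of the quadrilateral through $e$. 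For case \ref{triangle_case2}, the absence of any large irreducible means every generator satisfies $|e^{(g)}| \leq \sqrt{2\vol(P)}$, so $S_P$ is contained in the simplex $\{|e| \leq \sqrt{2\vol(P)}\}$; since that simplex has volume exactly $\vol(P) = \vol(S_P)$ and $S_P$ is a convex lower set containing the origin, equality follows, with the coordinate vertices $(v,0),(0,v)$ again supplied by the Combinatorial Nullstellensatz bound of \Cref{cont_comb_nsts} applied to the coordinate projections of $P$.
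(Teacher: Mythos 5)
Your proposal contains the paper's proof inside it: the two tools are \Cref{triangle_spanned} and \Cref{relatively_prime_inequality}, and the closing move is the volume count. Concretely, for any irreducible $g$ not associated to $f$, relative primality gives
\[
|{\sm(f)}||{\sm(g)}| \leq 2\vol(\NP(f),\NP(g)) \leq 2\vol(l_{P,f}\cdot P,\ l_{P,g}\cdot P) = 2\,l_{P,f}\,l_{P,g}\vol(P),
\]
so $|\sm(g)|/l_{P,g}\leq v$; hence $Z_P$ lies in the claimed hull $Q$ and $S_P=\overline{\hull(Z_P)}\subseteq Q$ (note the constraint this produces is $x_1+x_2\leq v$, the chord through $(v,0)$ and $(0,v)$, not an edge ``through $e$''). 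Since $\vol(Q)=\tfrac12 v(e_1+e_2)=\vol(P)=\vol(S_P)$ and a proper inclusion of compact convex sets of positive volume forces a strict volume gap, equality follows; case \ref{triangle_case2} is the same with the triangle $\{x_1+x_2\leq v\}$. That is exactly the paper's argument.

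However, two steps you lean on are flawed, and you have located the burden of proof in the wrong direction. First, you may not ``take $P$ to be the standard triangle'': the affine maps preserving $\Z^2$ form the discrete group $\mathrm{GL}_2(\Z)\ltimes\Z^2$, so a general triangle cannot be so normalized, and $S_P$, $E_A$, $\sm$ are \emph{not} invariant under other affine maps --- they depend on the lattice and the monomial order; only $l_{P,A}$ is affine-invariant, which is why \Cref{largest_triangle} may normalize but this theorem may not. Second, what you call the main obstacle --- exhibiting $(v,0),(0,v)\in S_P$ so that $S_P\supseteq Q$ --- needs no proof and cannot be proved your way. Coordinate swapping is not a symmetry of a general triangle, and even when it preserves $P$ and $\Z^2$ it changes the monomial order ($X_1<X_2$ becomes $X_2<X_1$), so it need not preserve the order-dependent sets (e.g.\ $E_{\{(0,0),(1,1)\}}=\{(0,0),(1,0)\}$ is not swap-invariant); moreover symmetry applied to $e$ only yields $(e_2,e_1)$, never $(v,0)$. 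Likewise, in case \ref{triangle_case2} the \Cref{cont_comb_nsts} route provably cannot reach $(v,0)$: the segment function with $\sm=(1,0)$ shows the horizontal width of $P$ is a normalized value of an irreducible, so in case \ref{triangle_case2} that width is strictly less than $\sqrt{2\vol(P)}$, and no box inside $P$ gets you out to $(v,0)$. The vertex $(v,0)$ emerges only from the volume-forcing step, never from a direct construction. Delete the normalization and the symmetry/Nullstellensatz detours, and what remains of your proposal is precisely the paper's proof.
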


Note that the numbers $v$ and $w$ in \Cref{main_triangle_thm} agree with the numbers $v_P$ and $w_P$ in \Cref{vw_converge} and we have $v_Pw_P=2\vol(P)$ as claimed in the abstract.

Also, note that any function $f$ satisfying $l_{P,f}^{-1}\cdot|\sm(f)|\geq\sqrt{2\vol(P)}$ is large:
\[
|\sm(f)| \geq \sqrt{2\vol(l_{P,f}\cdot P)} \geq \sqrt{2\vol(\NP(f))}.
\]

\begin{proof}\ 
\begin{enumerate}[label=\alph*)]
\item There is no other (nonassociated) irreducible element $f'$ of $F_{\Z^2}$ with $l_{P,f'}^{-1}\cdot |\sm(f')|>v$ because \Cref{relatively_prime_inequality} says that
\begin{align*}
|\sm(f)||\sm(f')|
&\leq 2\vol(\NP(f),\NP(f')) \\
&\leq 2\vol(l_{P,f}\cdot P, l_{P,f'}\cdot P) \\
&= 2l_{P,f}l_{P,f'}\vol(P).
\end{align*}
By \Cref{triangle_spanned}, it follows that
\[
S_P \subseteq \hull(\{(0,0), (v,0), (e_1,e_2), (0,v)\}).
\]
But both sides are compact sets of volume $\vol(P)$.
\item By \Cref{triangle_spanned}, we have
\[
S_P \subseteq \hull(\{(0,0),(v,0),(0,v)\})
\]
and again both sides have the same volume $\vol(P)$.
\qedhere
\end{enumerate}
\end{proof}

In case \ref{triangle_case1}, the polygon $S_P$ is either a quadrilateral (if $e_1,e_2>0$) or a triangle, as in \Cref{shapes}.

Both shapes can occur:

\begin{example}
If $P$ is the triangle spanned by $(0,0)$, $(p,0)$, and $(0,q)$, with $p,q>0$, then $S_P=P$ according to \Cref{cont_lower}.
\end{example}

\begin{example}\label{example_quad}
Let $P$ be the triangle spanned by $(0,0)$, $(4,2)$, and $(2,3)$. Consider the function $f\in F_{P\cap\Z^2}$ with values as in \Cref{example_quad_pic}. One can check that $\sm(f) = (2,1)$. Furthermore, $f$ is irreducible due to (\ref{NPast}) because there are no two lattice polygons consisting of more than one point whose Minkowski sum is $\NP(f)=P$. It follows that $S_P$ is the quadrilateral spanned by $(0,0)$, $(\frac83,0)$, $(2,1)$, and $(0,\frac83)$. (See also \Cref{triangle_approx}.)
\end{example}
\begin{figure}[ht]
\begin{tikzpicture}[radius=2pt,below]
\draw[fill=black!10] (0,0) -- (4,2) -- (2,3) -- cycle;
\fill (0,0) circle node {$-1$};
\fill (1,1) circle node {$4$};
\fill (2,1) circle node {$-1$};
\fill (2,2) circle node {$-6$};
\fill (3,2) circle node {$4$};
\fill (4,2) circle node {$-1$};
\fill (2,3) circle node {$1$};
\end{tikzpicture}
\caption{}\label{example_quad_pic}
\end{figure}

\begin{example}
Let $P\subset\R^2$ be a compact triangle and let
\[
w := \max_{y\in\R} \vol(\{x\in\R \mid (x,y)\in P\})
\]
be the maximal length of a horizontal line segment contained in $P$. If $w \geq \sqrt{2\vol(P)}$, then
\[
S_P = \hull(\{(0,0), (w,0), (0,v)\}),
\]
where
\[
v := 2\vol(P)/w = \max\{y\mid (x,y)\in P\} - \min\{y\mid (x,y)\in P\}
\]
is the vertical height of $P$.
\end{example}
\begin{proof}
Consider the function $f\in F_{\{0,0\},\{1,0\}}$ with $f(0,0)=-1$ and $f(1,0)=1$. It is irreducible and $\sm(f)=(1,0)$. Moreover, $l_{P,f} = w^{-1}$. The claim then follows from \Cref{main_triangle_thm}.
\end{proof}

We end this section with some questions that appear to be open.

\begin{question}
Does case \ref{triangle_case2} occur for any compact triangle~$P$?
\end{question}

\begin{remark}
A related open question is whether the Seshadri constant of a weighted projective plane $\vP^{a,b,c}$ is always rational. The corresponding triangle $P$ would need to be in case \ref{triangle_case2}.
\end{remark}

\begin{remark}
Let $\mathcal T$ be the set of compact triangles $P\subset\R^2$ with the natural topology. The triangles $P$ such that there is an irreducible element $f$ of $F_{\Z^2}$ with $l_{P,f}^{-1}\cdot |\sm(f)| > \sqrt{2\vol(P)}$ form an open subset $\mathcal U$ of $\mathcal T$. (Any particular function $f$ works for an open set of triangles.)
\end{remark}

\begin{question}
Is this open subset $\mathcal U$ dense in $\mathcal T$?
\end{question}

\begin{remark}
If $\mathcal U$ is not dense in $\mathcal T$, there is a triangle $P\notin \mathcal U$ whose corners have rational coordinates but such that $\sqrt{2\vol(P)}$ is irrational. This triangle would provide an example for case \ref{triangle_case2}: If there were an irreducible element $f$ of $F_{\Z^2}$ with $l_{P,f}^{-1}\cdot |\sm(f)| = \sqrt{2\vol(P)}$, then $l_{P,f}$ would have to be irrational. This is impossible as both $P$ and $\NP(f)$ are polygons with rational coordinates.
\end{remark}

\begin{question}
What can you say about $S_P$ when $P$ is a polygon with more than three sides? For example, is $S_P$ always a polygon?
\end{question}

\begin{question}
Can these results be generalized to higher dimensions $n$? Is $S_P$ a polytope when $P$ is an $n$-dimensional simplex? Is $S_P$ a polytope when $P$ is any polytope?
\end{question}

\section{Irreducible functions}\label{irreducible_section}

We again let $n=2$ and assume that $\leq$ is the degree-lexicographic monomial order with $X_1<X_2$.

Let $\mathcal F$ be the set of large irreducible elements $f$ of $F_{\Z^2}$ and let
\[
\mathcal P = \{\NP(f) \mid f\in\mathcal F\}.
\]
We have seen that each $P\in\mathcal P$ arises from exactly one function $f$ up to convolution by a unit.

Composing any $f\in F_{\Z^2}$ with an affine linear transformation of $\R^2$ preserving $\Z^2$ doesn't change irreducibility. Since linear transformations don't change degrees of polynomials, we also have $|\sm(f)|=|\sm(f\circ T)|$. Hence, $\mathcal P$ is invariant under affine linear transformations preserving $\Z^2$. There are still infinitely many orbits under the action of this group:

\begin{theorem}\label{infinitely_many}
There are infinitely many triangles $P\in\mathcal P$ modulo affine linear transformations preserving $\Z^2$.
\end{theorem}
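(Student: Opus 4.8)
The plan is to reduce the statement to producing large irreducible functions whose Newton polytopes are triangles of infinitely many distinct areas. An affine linear transformation $T(x)=Mx+b$ preserving $\Z^2$ has $M\in\mathrm{GL}_2(\Z)$, hence $|\det M|=1$, so it preserves the area of any region; in particular, two triangles of different area can never be affinely equivalent. Since each $P\in\mathcal P$ is the Newton polytope of a large irreducible $f\in F_{\Z^2}$, it therefore suffices to exhibit an infinite sequence of large irreducible functions $f_1,f_2,\dots$ whose Newton polytopes are triangles $P_1,P_2,\dots$ with $\vol(P_m)$ taking infinitely many distinct values (for instance $\vol(P_m)\to\infty$).

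First I would construct such a family explicitly, generalizing \Cref{example_quad}. For each $m$ I would write down a triangle $P_m\subset\R^2$ with lattice vertices together with a function $f_m\in F_{P_m\cap\Z^2}$ whose support is exactly $P_m\cap\Z^2$ (so that $\NP(f_m)=P_m$) and whose coefficients are chosen so that $\langle f_m,X^{e'}\rangle=0$ for every monomial $X^{e'}<X^{e^{(m)}}$, where $e^{(m)}\in\N^2$ is a prescribed exponent; this forces $\sm(f_m)=e^{(m)}$. Largeness then amounts to the inequality $|e^{(m)}|^2>2\vol(P_m)$, which I would verify directly from the chosen coordinates. The counting heuristic is favorable: imposing $\sm(f_m)=e^{(m)}$ amounts to $q(e^{(m)})=\binom{|e^{(m)}|+1}{2}+e^{(m)}_2$ linear conditions on $f_m$, while $f_m$ ranges over the space $F_{P_m\cap\Z^2}$ of dimension $|P_m\cap\Z^2|\approx\vol(P_m)$; since largeness requires $|e^{(m)}|^2$ just above $2\vol(P_m)$, these two counts are nearly balanced, so the triangles must be designed carefully, but such designs exist, as \Cref{example_quad} already demonstrates for one area.

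Irreducibility I would obtain exactly as in \Cref{example_quad}: by (\ref{NPast}), any nontrivial factorization $f_m=g\ast h$ would give a Minkowski decomposition $P_m=\NP(g)+\NP(h)$ into lattice polygons each containing more than one point, so it suffices to choose each $P_m$ to be Minkowski-indecomposable among lattice polygons. Thus the crux of the argument is the combined construction: choosing triangles $P_m$ of unboundedly many distinct areas that are simultaneously (i) Minkowski-indecomposable and (ii) able to support a function with $\sm(f_m)=e^{(m)}$ satisfying $|e^{(m)}|^2>2\vol(P_m)$. I expect the main obstacle to be arranging the cancellation of all lower monomials — that is, solving the near-square linear system defining $\sm(f_m)=e^{(m)}$ while keeping the solution supported on all the vertices of $P_m$, so that $\NP(f_m)$ really equals $P_m$ and not a proper subpolytope. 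A clean recursive or closed-form description of the coefficients of $f_m$, or a parametrized deformation of the solution in \Cref{example_quad}, should make both the solvability and the indecomposability uniform in $m$, at which point distinctness of the areas finishes the proof.
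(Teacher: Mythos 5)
Your reduction is fine: an affine transformation preserving $\Z^2$ has linear part in $\mathrm{GL}_2(\Z)$, hence preserves area, so it suffices to exhibit large irreducible functions whose Newton polytopes are triangles of unboundedly many distinct areas. This is also how the paper concludes. But everything after that reduction is a declaration of intent rather than a proof: the triangles $P_m$, the exponents $e^{(m)}$, and the functions $f_m$ are never specified, and each of the three properties you need --- existence of $f_m$ with $\sm(f_m)=e^{(m)}$, full support so that $\NP(f_m)=P_m$, and Minkowski-indecomposability of $P_m$ --- is deferred to a construction that ``should'' exist. That construction is the entire content of the theorem, so this is a genuine gap, not a matter of detail.

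Moreover, the counting heuristic you invoke cannot close it. A nonzero solution of the $q(e^{(m)})$ linear conditions only gives a function $f$ with $\sm(f)\geq X^{e^{(m)}}$ in the monomial order, not equality; and it gives no control whatsoever on $\supp(f)$, which could lie in a proper subpolygon of $P_m$ --- in which case $\NP(f)$ need not be indecomposable (killing your irreducibility argument) and its area is no longer the one you prescribed. The paper resolves exactly these two issues for the explicit family $P_r=\hull(\{(0,0),(r,0),(-1,r+2)\})$, with $2\vol(P_r)=r(r+2)<(r+1)^2$, by indirect arguments rather than explicit coefficients: it determines $E_{P_r\cap\Z^2}=(Q_r\cap\N^2)\cup\{(r+1,0)\}$ exactly, where $Q_r=\hull(\{(0,0),(r,0),(0,r)\})$, using \Cref{lower} together with the observation that $\prod_{i=0}^r(X_1-i)$ is, up to scalar, the only candidate polynomial with leading monomial $X_1^{r+1}$ vanishing on $Q_r\cap\Z^2$ and that it fails to vanish at $(-1,r+2)$; this yields $f_r$ with $\sm(f_r)=(r+1,0)$, hence largeness. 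It then shows $\supp(f_r)$ contains all three corners of $P_r$ via the uniqueness of $f_r$, an application of \Cref{combinatorial_nullstellensatz} to $P_r\cap\Z^2\setminus\{(r,0)\}$, and the lattice symmetry $(x,y)\mapsto(r-x-y,y)$ of $P_r$. Some argument of this kind (or a fully verified closed form for the coefficients, uniform in $m$) is what your proposal is missing; without it the theorem is not proved.
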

\begin{proof}
For $r\geq1$, let $P_r$ be the triangle spanned by the points $(0,0)$, $(r,0)$, and $(-1,r+2)$. We have $2\vol(P_r)=r(r+2)<(r+1)^2$. Let $Q_r\subset P_r$ be the triangle spanned by $(0,0)$, $(r,0)$, and $(0,r)$. (See \Cref{pic_PrQr}.) We have
\[
P_r\cap\Z^2 = (Q_r\cap\Z^2) \cup\{(-1,r+2)\}.
\]
By \Cref{lower}, we know that
\[
E_{P_r\cap\Z^2} \supseteq E_{Q_r\cap\Z^2} = Q_r\cap\N^2.
\]
In particular, the smallest leading monomial of any nonzero function vanishing on $Q_r\cap\Z^2$ is $X_1^{r+1}$. Up to scalar multiplication, there is then only one such polynomial, namely $\prod_{i=0}^r (X_1-i)$. This polynomial doesn't vanish at $(-1,r+2)$, so there is no polynomial with leading monomial $X_1^{r+1}$ vanishing on $P_r\cap\Z^2$. In other words, $(r+1,0)\in E_{P_r\cap\Z^2}$. Then, $E_{P_r\cap\Z^2} = (Q_r\cap\N^2)\cup\{(r+1,0)\}$. (See \Cref{pic_EPr}.)

Let $f_r$ be a function on $P_r\cap\Z^2$ with $\sm(f_r) = (r+1,0)$. It is up to scalar multiplication the only function $g$ on $P_r\cap\Z^2$ with $|\sm(g)| = r+1$. Since $(r+1,0)\notin E_{Q_r\cap\Z^2}$, we must have $(-1,r+2)\in\supp(f_r)$.

Similarly, since
\[
P_r\cap\Z^2\setminus\{(r,0)\} \subseteq \{-1,\dots,r-1\}\times\{0,\dots,r+2\},
\]
\Cref{combinatorial_nullstellensatz} implies that
\[
E_{P_r\cap\Z^2\setminus\{(r,0)\}} \subseteq \{0,\dots,r\}\times\{0,\dots,r+2\},
\]
so $(r+1,0)\notin E_{P_r\cap\Z^2\setminus\{(r,0)\}}$ (in fact, $E_{P_r\cap\Z^2\setminus\{(r,0)\}}=Q_r\cap\N^2$) and therefore $(r,0)\in\supp(f_r)$.

Consider the affine linear transformation $(x,y)\mapsto(r-x-y,y)$. It preserves the triangle $P_r$, but swaps the two corners $(0,0)$ and $(r,0)$. The transformation preserves $|\sm(f)|$ for any $f\in F_{\Z^2}$ and $f_r$ is up to scalar multiplication the only function with $|\sm(f_r)| = r+1$. Hence, the composition of $f_r$ with this transformation is a multiple of $f_r$. Since we already showed $(r,0)\in\supp(f_r)$, we must also have $(0,0)\in\supp(f_r)$. Hence, $\supp(f_r)$ in fact contains all three corners of $P_r$, so $\NP(f_r) = P_r$.

The polygon $P_r$ cannot be written as the Minkowski sum of two lattice polygons consisting of more than one point. Hence, $f_r$ is irreducible. We have already shown that $|\sm(f_r)|=r+1>\sqrt{2\vol(P_r)}$, so $f_r\in\mathcal F$ and $P_r\in\mathcal P$.

No two of the polygons $P_r$ for $r\geq1$ are equivalent up to an affine linear transformation preserving $\Z^2$ because they all have a different area.
\end{proof}

\begin{figure}[ht]
\begin{subfigure}[b]{0.47\textwidth}
\begin{tikzpicture}[radius=2/0.8pt,scale=0.8]
\tikzmath{
\r = 4;
{\draw[fill=red!10] (0,0) -- (\r,0) -- (-1,\r+2) -- cycle;};
{\draw[fill=blue!10] (0,0) -- (\r,0) -- (0,\r) -- cycle;};
int \x;
int \y;
for \x in {-2,...,\r+1}{
for \y in {-1,...,\r+3}{
{\fill[black!10] (\x,\y) circle;};
};
};
for \x in {0,...,\r}{
for \y in {0,...,\r-\x}{
{\fill (\x,\y) circle;};
};
};
{\fill (-1,\r+2) circle;};
{\draw (0,0) node[below] {$(0,0)$};};
{\draw (\r,0) node[below] {$(r,0)$};};
{\draw (-1,\r+2) node[above] {$(-1,r+2)$};};
{\draw (0,\r) node[left] {$(0,r)$};};
}
\end{tikzpicture}
\caption{The triangles $P_r$ and $Q_r$}\label{pic_PrQr}
\end{subfigure}
\hfill
\begin{subfigure}[b]{0.45\textwidth}
\begin{tikzpicture}[radius=2/0.8pt,scale=0.8]
\tikzmath{
\r = 4;
int \x;
int \y;
for \x in {0,...,\r}{
for \y in {0,...,\r-\x}{
{\fill (\x,\y) circle;};
};
};
{\fill (\r+1,0) circle;};
{\draw[->] (0,0) -- (\r+2.5,0);};
{\draw[->] (0,0) -- (0,\r+1.5);};
{\draw (\r,0) node[below] {$r$};};
{\draw (0,\r) node[left] {$r$};};
}
\end{tikzpicture}
\caption{The set $E_{P_r\cap\Z^2}$}\label{pic_EPr}
\end{subfigure}
\caption{Illustrations for the proof of \Cref{infinitely_many}}
\end{figure}
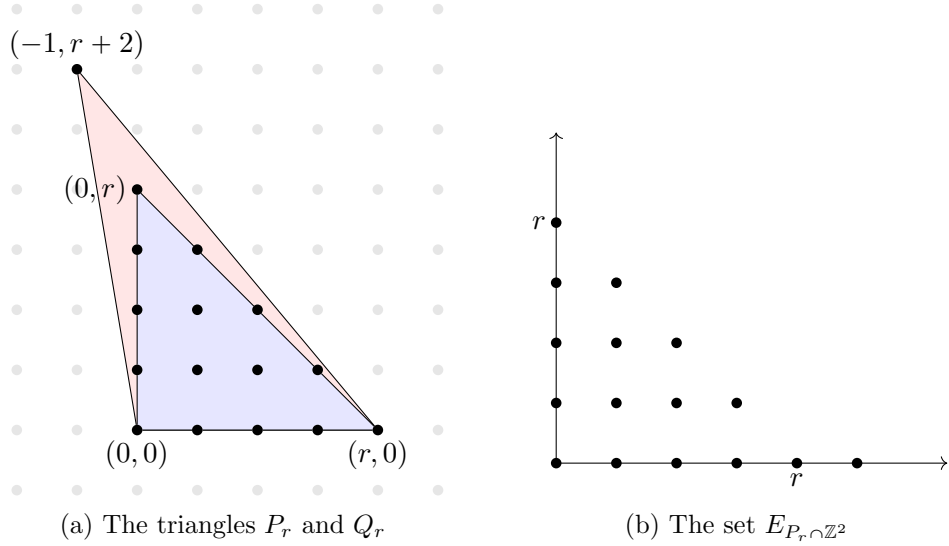

\begin{remark}
If $r<s$, then $2\vol(P_r,P_s) = (r+2)s$. Hence, the irreducible functions $f_r$ and $f_s$ satisfy equality in \Cref{relatively_prime_inequality} when $r+1=s$.
\end{remark}

There are only finitely many convex lattice polygons of positive bounded area modulo affine linear transformations preserving $\Z^2$. \Cref{irreducible_polygons} lists the elements $P$ of $\mathcal P$ of area at most $20$ up to this action, together with the corresponding value of $|\sm(f)|$.

{
\renewcommand{\arraystretch}{1.2}
\begin{longtable}{ccc}
\caption{Elements of $\mathcal P$}\label{irreducible_polygons}\\
\textbf{Corners of $P$}&$|\sm(f)|$&$2\vol(P)$\\\hline
\endfirsthead
\textbf{Corners of $P$}&$|\sm(f)|$&$2\vol(P)$\\\hline
\endhead
$(0,0),(1,0)$&$1$&$0$\\
$(0,0), (1,0), (2,3)$
&
$2$
&
$3$
\\
$(0,0), (1,0), (1,1), (-4,3)$
&
$3$
&
$8$
\\
$(0,0), (1,0), (3,8)$
&
$3$
&
$8$
\\
$(0,0), (1,0), (1,1), (-10,4)$
&
$4$
&
$15$
\\
$(0,0), (1,0), (1,1), (-9,4), (-7,3)$
&
$4$
&
$15$
\\
$(0,0), (1,0), (1,1), (-8,6)$
&
$4$
&
$15$
\\
$(0,0), (1,0), (1,2), (-3,7)$
&
$4$
&
$15$
\\
$(0,0), (1,0), (3,5), (4,10)$
&
$4$
&
$15$
\\
$(0,0), (1,0), (4,15)$
&
$4$
&
$15$
\\
$(0,0), (1,0), (1,1), (-17,5), (-7,2)$
&
$5$
&
$24$
\\
$(0,0), (1,0), (1,1), (-16,5), (-10,3)$
&
$5$
&
$24$
\\
$(0,0), (1,0), (1,1), (-11,8), (-10,7), (-3,2)$
&
$5$
&
$24$
\\
$(0,0), (1,0), (1,1), (-11,8), (-6,4)$
&
$5$
&
$24$
\\
$(0,0), (1,0), (1,1), (-9,7), (-10,7)$
&
$5$
&
$24$
\\
$(0,0), (1,0), (1,1), (-7,3), (-16,5)$
&
$5$
&
$24$
\\
$(0,0), (1,0), (1,1), (-7,6), (-11,8)$
&
$5$
&
$24$
\\
$(0,0), (1,0), (1,1), (-7,12), (-6,10), (-2,3)$
&
$5$
&
$24$
\\
$(0,0), (1,0), (1,1), (-6,11), (-7,12), (-3,5)$
&
$5$
&
$24$
\\
$(0,0), (1,0), (1,1), (-4,3), (-12,5)$
&
$5$
&
$24$
\\
$(0,0), (1,0), (1,1), (-4,9), (-2,2)$
&
$5$
&
$24$
\\
$(0,0), (1,0), (1,1), (-3,2), (-16,5), (-13,4)$
&
$5$
&
$24$
\\
$(0,0), (1,0), (1,1), (-2,5), (-4,2)$
&
$5$
&
$24$
\\
$(0,0), (1,0), (1,2), (-8,6)$
&
$5$
&
$24$
\\
$(0,0), (1,0), (1,2), (-6,5), (-7,5)$
&
$5$
&
$24$
\\
$(0,0), (1,0), (1,2), (-4,14)$
&
$5$
&
$24$
\\
$(0,0), (1,0), (1,3), (-4,9)$
&
$5$
&
$24$
\\
$(0,0), (1,0), (2,3), (-3,6)$
&
$5$
&
$24$
\\
$(0,0), (1,0), (3,3), (5,12)$
&
$5$
&
$24$
\\
$(0,0), (1,0), (5,24)$
&
$5$
&
$24$
\\
$(0,0), (1,0), (10,24)$
&
$5$
&
$24$
\\
$(0,0), (1,0), (1,1), (-15,11), (-2,1)$
&
$6$
&
$34$
\\
$(0,0), (1,0), (2,2), (-5,5), (-8,6), (-7,5)$
&
$6$
&
$34$
\\
$(0,0), (1,0), (2,2), (0,5), (-4,10), (-3,7)$
&
$6$
&
$34$
\\
$(0,0), (1,0), (1,1), (-25,6), (-13,3)$
&
$6$
&
$35$
\\
$(0,0), (1,0), (1,1), (-21,9), (-19,8), (-12,5)$
&
$6$
&
$35$
\\
$(0,0), (1,0), (1,1), (-20,13), (-17,11)$
&
$6$
&
$35$
\\
$(0,0), (1,0), (1,1), (-14,4), (-25,6)$
&
$6$
&
$35$
\\
$(0,0), (1,0), (1,1), (-14,10), (-8,5)$
&
$6$
&
$35$
\\
$(0,0), (1,0), (1,1), (-13,19), (-9,13)$
&
$6$
&
$35$
\\
$(0,0), (1,0), (1,1), (-10,4), (-20,6)$
&
$6$
&
$35$
\\
$(0,0), (1,0), (1,1), (-9,3), (-25,6), (-21,5)$
&
$6$
&
$35$
\\
$(0,0), (1,0), (1,1), (-9,15), (-8,13), (-3,4)$
&
$6$
&
$35$
\\
$(0,0), (1,0), (1,1), (-9,15), (-3,4), (-1,1)$
&
$6$
&
$35$
\\
$(0,0), (1,0), (1,1), (-8,6), (-6,2)$
&
$6$
&
$35$
\\
$(0,0), (1,0), (1,1), (-8,14), (-9,15), (-1,1)$
&
$6$
&
$35$
\\
$(0,0), (1,0), (1,1), (-8,22), (-2,5)$
&
$6$
&
$35$
\\
$(0,0), (1,0), (1,1), (-7,6), (-7,3)$
&
$6$
&
$35$
\\
$(0,0), (1,0), (1,1), (-6,4), (-15,6)$
&
$6$
&
$35$
\\
$(0,0), (1,0), (1,1), (-5,21), (-4,16), (-1,3)$
&
$6$
&
$35$
\\
$(0,0), (1,0), (1,1), (-5,21), (-2,7), (-1,3)$
&
$6$
&
$35$
\\
$(0,0), (1,0), (1,1), (-5,22), (-1,3)$
&
$6$
&
$35$
\\
$(0,0), (1,0), (1,1), (-5,27), (-1,5)$
&
$6$
&
$35$
\\
$(0,0), (1,0), (1,1), (-4,18), (-5,21), (-1,3)$
&
$6$
&
$35$
\\
$(0,0), (1,0), (1,1), (-2,2), (-20,6), (-7,2)$
&
$6$
&
$35$
\\
$(0,0), (1,0), (1,2), (-12,9)$
&
$6$
&
$35$
\\
$(0,0), (1,0), (1,2), (-7,13), (-8,14)$
&
$6$
&
$35$
\\
$(0,0), (1,0), (1,2), (-5,16), (-2,5)$
&
$6$
&
$35$
\\
$(0,0), (1,0), (1,2), (-5,21), (-2,8)$
&
$6$
&
$35$
\\
$(0,0), (1,0), (1,2), (-5,23)$
&
$6$
&
$35$
\\
$(0,0), (1,0), (1,2), (-4,9), (-8,14)$
&
$6$
&
$35$
\\
$(0,0), (1,0), (1,2), (-1,5), (-8,14)$
&
$6$
&
$35$
\\
$(0,0), (1,0), (1,3), (-5,17)$
&
$6$
&
$35$
\\
$(0,0), (1,0), (1,4), (-5,11)$
&
$6$
&
$35$
\\
$(0,0), (1,0), (3,3), (6,14), (5,13)$
&
$6$
&
$35$
\\
$(0,0), (1,0), (3,5), (6,20)$
&
$6$
&
$35$
\\
$(0,0), (1,0), (4,7), (12,28)$
&
$6$
&
$35$
\\
$(0,0), (1,0), (5,14), (6,21)$
&
$6$
&
$35$
\\
$(0,0), (1,0), (6,35)$
&
$6$
&
$35$
\\
\end{longtable}
}

\printbibliography

\end{document}